\tikzset{
every node/.style={draw, circle, inner sep=2pt}
}
\newtheorem{theorem}{Theorem}[section]
\newtheorem{lemma}[theorem]{Lemma}
\newtheorem{proposition}[theorem]{Proposition}
\newtheorem{corollary}[theorem]{Corollary}
\theoremstyle{definition}
\newtheorem{definition}[theorem]{Definition}
\newtheorem{observation}[theorem]{Observation}
\newtheorem{remark}[theorem]{Remark}
\newtheorem{example}[theorem]{Example}
\newtheorem{question}[theorem]{Question}
\newtheorem{conjecture}[theorem]{Conjecture}
\newtheorem*{theorem1.5}{Theorem 1.5}
\newtheorem*{theorem1.7}{Theorem 1.7}
\newcommand{\bone}{{\bf 1}}
\newcommand{\bzero}{{\bf 0}}
\newcommand{\trans}{^\top}
\newcommand{\bx}{{\bf x}}
\newcommand{\bv}{{\bf v}}
\newcommand{\bc}{{\bf c}}
\newcommand{\bu}{{\bf u}}
\newcommand{\be}{{\bf e}}
\DeclareMathOperator{\sign}{sgn}
\newcommand{\Scl}{\mathcal{S}^{\rm cl}}
\renewcommand{\vec}[1]{{\bf #1}}
\renewcommand{\bar}{\overline}
\definecolor{Green}{RGB}{34, 139, 34}
\renewcommand{\c}[1]{\mathcal{#1}}
\newcommand{\sm}{\setminus}
\newcommand{\mker}{\operatorname{ker}}
\renewcommand{\S}{\mathcal S}
\title{Complementary Vanishing Graphs}
\author{
Craig Erickson\thanks{Saint Paul,
MN, USA (cerickson.phd@gmail.com).} \and
Luyining Gan\thanks{Department of Mathematics and Statistics, University of Nevada Reno, Reno, NV 89557, USA (lgan@unr.edu).} \and
J{\"u}rgen Kritschgau\thanks{Department of Mathematics, Carnegie Mellon University, Pittsburgh, PA 15213, USA (jkritsch@andrew.cmu.edu).} \and
Jephian C.-H.~Lin\thanks{Department of Applied Mathematics, National Sun Yat-sen University, Kaohsiung 80424, Taiwan (jephianlin@gmail.com)} \and
Sam Spiro\thanks{Department of Mathematics, UC San Diego, La Jolla, CA 92093, USA (sspiro@ucsd.edu).}
}
\date{\today}
\begin{document}

\nolinenumbers
\maketitle

\begin{abstract}
Given a graph $G$ with vertices $\{v_1,\ldots,v_n\}$, we define $\mathcal{S}(G)$ to be the set of symmetric matrices $A=[a_{i,j}]$ such that for $i\ne j$ we have $a_{i,j}\ne 0$ if and only if $v_iv_j\in E(G)$.  Motivated by the Graph Complement Conjecture, we say that a graph $G$ is complementary vanishing if there exist matrices $A \in \mathcal{S}(G)$ and $B \in \mathcal{S}(\overline{G})$ such that $AB=O$. We provide combinatorial conditions for when a graph is or is not complementary vanishing, and we characterize which graphs are complementary vanishing in terms of certain minimal complementary vanishing graphs. In addition to this, we determine which graphs on at most $8$ vertices are complementary vanishing.
\end{abstract}  

\noindent{\bf Keywords:} 
Graph Complement Conjecture, Minimum rank, Maximum nullity, Inverse eigenvalue problem for graphs

\medskip

\noindent{\bf AMS subject classifications:}
05C50, 
15A18, 
15B57, 
65F18. 

\section{Introduction}

Let $G$ be a simple graph with vertex set $\{v_1, v_2, \dots, v_n\}$. The \emph{set of symmetric matrices described by $G$}, 
$\mathcal{S}(G)$, is the set of all real symmetric $n\times n$ matrices $A = \begin{bmatrix}a_{i,j}\end{bmatrix}$ 
 such that for $i\ne j$ we have $a_{i,j}\ne 0$ if and only if $v_iv_j\in E(G)$.  Note that no restrictions are placed on the diagonal entries of $A\in \mathcal{S}(G)$. Given a graph $G$, the \emph{inverse eigenvalue problem of a graph} (IEP-G for short) refers to the problem of determining the spectra of matrices in $\S(G)$. 
A large amount of work has been done in this area, see for example~\cite{GK60, FH07, BF04, AAC13,BFH17, BBF20}.

Specifying exactly which graphs can achieve a particular spectrum is generally hard, and because of this, much of the work in the IEP-G considers parameters that measure more general spectral properties.
For example, the \emph{minimum rank}, $\text{mr}(G)$, of a graph $G$ is defined as the minimum rank of a matrix $A\in \S(G)$. 
The \emph{maximum nullity}, $M(G)$, of a graph $G$ is defined as the maximum nullity of a matrix $A \in \S(G)$.
Since $\S(G)$ is closed under translations by $rI_n$ for real values $r$, the parameter $M(G)$ is also equal to the largest multiplicity of an eigenvalue of $A\in \S(G)$. 
It follows from the definitions that $\text{mr}(G)+M(G)=n$ whenever $G$ is an $n$-vertex graph. 
The minimum rank and maximum nullity of graphs was one of the main subjects of the 2006 American Institute of Mathematics workshop~\cite{AIM06}.
This workshop was a catalyst for research on the IEP-G, and there the following conjecture was posed.

\begin{conjecture}[Graph Complement Conjecture]\label{Conj:GCC}
For any graph $G$ of order $n$,
\[
    \text{mr} (G) + \text{mr} (\overline{G}) \le n +2,
\]
where $\overline{G}$ denotes the complement of $G$.  Equivalently,
\[
M(G) + M(\overline{G}) \geq n - 2.    
\]
\end{conjecture}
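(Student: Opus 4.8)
The natural line of attack, and the one underlying this paper's framework, is through the complementary vanishing property. The starting point is a purely linear-algebraic observation: if $A$ and $B$ are any $n \times n$ matrices with $AB = O$, then every column of $B$ lies in $\ker A$, so $\operatorname{Col}(B) \subseteq \ker A$ and hence $\rank(A) + \rank(B) \le \rank(A) + \dim(\ker A) = n$. The plan is to leverage this against the minimum rank. Suppose $G$ is complementary vanishing, witnessed by $A \in \S(G)$ and $B \in \S(\overline{G})$ with $AB = O$. Since $\rank(A) \ge \text{mr}(G)$ and $\rank(B) \ge \text{mr}(\overline{G})$ by definition of minimum rank, the inequality above yields $\text{mr}(G) + \text{mr}(\overline{G}) \le n$, which is strictly stronger than the conjectured bound $n+2$. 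Thus the first step reduces the conjecture to graphs that are \emph{not} complementary vanishing: if one could show every graph is complementary vanishing, an improved form of the conjecture would follow at once.

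The main obstacle is precisely that this reduction cannot be completed, since not every graph is complementary vanishing, as the body of the paper establishes; this is exactly why the slack of $+2$ cannot simply be replaced by $0$. For the exceptional graphs I would relax the requirement $AB = O$ to $\rank(AB) \le 2$. By Sylvester's rank inequality $\rank(AB) \ge \rank(A) + \rank(B) - n$, so any pair $A \in \S(G)$, $B \in \S(\overline{G})$ with $\rank(AB) \le 2$ gives $\text{mr}(G) + \text{mr}(\overline{G}) \le \rank(A) + \rank(B) \le n + 2$, which is exactly the conjecture. Concretely, I would start from an ``almost complementary vanishing'' pair and attempt to correct the product by a rank-two perturbation supported so as not to disturb the prescribed zero/nonzero patterns of $A$ and $B$.

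I expect the genuine difficulty, and the reason the conjecture remains open in general, to lie in producing such pairs uniformly across all graphs: the pattern constraints on both $A$ and $B$ are rigid, and a perturbation that fixes the product for one graph need not respect the required support. A complementary strategy would be structural induction, reducing to building blocks (e.g.\ across cut vertices, joins, or vertex sums) and combining witnesses, but controlling $\rank(AB)$ under such combinations is delicate. Honestly, the Graph Complement Conjecture is a longstanding open problem, and the most the complementary vanishing approach can currently deliver is the conjecture (with room to spare) for the large class of complementary vanishing graphs characterized here, together with a precise description of where the obstruction remains.
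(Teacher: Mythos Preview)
The statement is a \emph{conjecture}, and the paper does not prove it; indeed the paper explicitly says ``Conjecture~\ref{Conj:GCC} is open in general and is evidently difficult.'' So there is no ``paper's own proof'' to compare against, and your write-up is not a proof either --- to your credit, you say so plainly in the last paragraph.

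What you have written is an accurate summary of how the paper's framework bears on the conjecture. Your first step (if $AB=O$ then $\rank A+\rank B\le n$, hence $\text{mr}(G)+\text{mr}(\overline G)\le n$ for complementary vanishing $G$) is exactly the content of the motivating Proposition immediately following the definition of complementary vanishing; the paper phrases it dually as $M(G)+M(\overline G)\ge n$. Your second step, relaxing $AB=O$ to $\rank(AB)\le 2$ via Sylvester's inequality, is a natural extension that the paper does not pursue; it is a correct reduction in principle, but --- as you yourself identify --- producing such a pair $(A,B)$ for \emph{every} graph is precisely the open problem, and nothing in the paper or in your outline supplies a mechanism for doing so. So the ``gap'' here is simply that the conjecture is unproved, and your proposal is an honest sketch of a partial approach rather than a proof.
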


 Conjecture~\ref{Conj:GCC} is an example of a Nordhaus--Gaddum problem, which are problems where one tries to prove bounds for $f(G)+f(\overline{G})$ where  $f$ is some graph-theoretic function.
An analogous Nordhaus--Gaddum conjecture for the Colin de Verdi\`{e}re number $\mu(G)$ (defined in~\cite{Y91}) was made by Kotlov et al.~\cite{KLV97}. 
Levene, Oblak and \v{S}migoc verified a Nordhaus--Gaddum conjecture for the minimum number of distinct eigenvalues for several graph families~\cite{LOS19}.

Conjecture~\ref{Conj:GCC} is open in general and is evidently  difficult. 
Some results related to Conjecture~\ref{Conj:GCC} for joins of graphs were studied and established by Barioli et al.\ \cite{BBF12}, and the authors also showed that Conjecture~\ref{Conj:GCC} is true for graphs on up to $10$ vertices.
Li, Nathanson, and Phillips~\cite{LNP14} showed that it suffices to prove Conjecture~\ref{Conj:GCC} for a certain class of graphs called complement critical graphs.  Theorem 3.16 in \cite{AIM06} implies that  Conjecture~\ref{Conj:GCC} is true for trees.  On the other hand, it was shown in \cite{AIM08} that the zero forcing number $Z(G)$ is an upper bound of $M(G)$, and it is known $Z(G) + Z(\overline{G}) \geq n-2$ for any graph on $n$ vertices \cite{EKSTRAND20131862,JORET2012488}.



With Conjecture~\ref{Conj:GCC} in mind, we make the following definition.
\begin{definition}
A graph $G$ is said to be \emph{complementary vanishing} if there are matrices $A\in\S(G)$ and $B\in\S(\overline{G})$ such that $AB = O$ (the zero matrix).
\end{definition}
The motivation for this definition is the following observation.

\begin{proposition}
If $G$ is an $n$-vertex graph which is complementary vanishing, then
\[M(G)+M(\overline{G})\ge n.\]
In particular, Conjecture~\ref{Conj:GCC} holds for $G$.
\end{proposition}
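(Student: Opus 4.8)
The plan is to translate the algebraic condition $AB=O$ into a statement about ranks, and then invoke the standard identity $\text{mr}+M=n$. First I would fix matrices $A\in\S(G)$ and $B\in\S(\overline{G})$ with $AB=O$, as guaranteed by the complementary vanishing hypothesis. The crucial observation is that $AB=O$ forces every column of $B$ to lie in $\mker(A)$, so that the column space of $B$ is contained in $\mker(A)$. Taking dimensions yields $\rank(B)\le\operatorname{nullity}(A)=n-\rank(A)$, that is, $\rank(A)+\rank(B)\le n$.

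Next I would feed this into the definition of minimum rank. Since $A\in\S(G)$ and $B\in\S(\overline{G})$ are particular matrices in their respective sets, we have $\rank(A)\ge\text{mr}(G)$ and $\rank(B)\ge\text{mr}(\overline{G})$. Combining with the inequality above gives $\text{mr}(G)+\text{mr}(\overline{G})\le\rank(A)+\rank(B)\le n$. Finally, applying the identity $\text{mr}(H)+M(H)=|V(H)|$ to both $G$ and $\overline{G}$ (both graphs on $n$ vertices) converts this into $M(G)+M(\overline{G})=2n-\bigl(\text{mr}(G)+\text{mr}(\overline{G})\bigr)\ge 2n-n=n$, which is the desired bound. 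Since $n\ge n-2$, the displayed inequality of Conjecture~\ref{Conj:GCC} follows for $G$.

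There is no substantive obstacle here; the argument is a short exercise in rank-nullity, and the key conceptual step is simply reading $AB=O$ as the containment $\operatorname{col}(B)\subseteq\mker(A)$. The only point that warrants care is the direction of the inequalities: one must remember that $\text{mr}$ is a \emph{minimum}, so a specific matrix in $\S(G)$ bounds it only from above in the sense that its rank is at least $\text{mr}(G)$, and correspondingly its nullity is at most $M(G)$. If one prefers to avoid the $\text{mr}$/$M$ translation entirely, the bound can be obtained directly in the language of nullity: from $\operatorname{col}(B)\subseteq\mker(A)$ one gets $\rank(B)\le\operatorname{nullity}(A)\le M(G)$, while $\rank(B)=n-\operatorname{nullity}(B)\ge n-M(\overline{G})$, and chaining these two bounds again yields $M(G)+M(\overline{G})\ge n$.
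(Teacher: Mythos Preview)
Your proof is correct and follows the same approach as the paper's: from $AB=O$ one deduces $\rank(A)+\rank(B)\le n$ (equivalently, $\operatorname{nullity}(A)+\operatorname{nullity}(B)\ge n$), and then bounds each nullity above by the corresponding maximum nullity. The paper's proof is simply a terser version of exactly what you wrote.
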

\begin{proof}
By assumption there exist $A\in \S(G),B\in \S(\overline{G})$ such that $AB=O$.  Thus the sum of the nullities of $A$ and $B$ is at least $n$. 
This in turn implies that $M(G)+M(\overline{G})\ge n$ as desired.
\end{proof}

The goal of this paper is to establish necessary and sufficient conditions for a graph to be complementary vanishing. 
Our main result characterizes which graphs are complementary vanishing  in terms of a  class of  ``minimal'' complementary vanishing  graphs $\mathcal {M}$. 
\begin{definition}
~
\begin{itemize}
    \item Let $\mathcal M$ denote the set of complementary vanishing graphs $G$  such that both $G$ and $\overline{G}$ are connected.
    \item Let $\mathcal{R}$ denote the smallest set of graphs containing $\mathcal{M}$ which is closed under taking disjoint unions, joins, and complements\footnote{We will use $G\sqcup H$ to denote the disjoint union of two graphs $G,H$. We recall that the join $G\vee H$ of two graphs $G,H$ is obtained by taking the disjoint union of $G$ and $H$ and then adding all possible edges between vertices of $G$ to vertices of $H$.}.
    \item Let $\mathcal C$ denote all the graphs $G$ such that either in $G$ or $\overline{G}$ there exist distinct vertices $u,v,w$ with $v,w\notin N(u)$, $|N(u)\setminus N(w)|=1$, and $|N(u)\setminus N(v)|=0$. 
\end{itemize}

\end{definition}

\begin{theorem}\label{thm:biconnected}
    A graph $G$ is complementary vanishing if and only if $G\in \mathcal {R}\setminus \mathcal C$.
\end{theorem}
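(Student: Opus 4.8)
The plan is to prove both implications through the connectivity decomposition of a graph, using that the class of complementary vanishing graphs is closed under complementation (if $AB=O$ with $A\in\S(G)$, $B\in\S(\overline G)$, then $BA=(AB)\trans=O$ certifies $\overline G$) together with two structural lemmas, one for disjoint unions and, dually, one for joins. Throughout I will use the elementary observation that $A\in\S(G)$, $B\in\S(\overline G)$ with $AB=O$ is equivalent to $\operatorname{col}(B)\subseteq\mker(A)$, so that producing a certificate amounts to placing the column space of one matrix inside the kernel of the other.

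First I would dispose of the forward implication, that every complementary vanishing graph lies in $\mathcal R\setminus\mathcal C$. Membership in $\mathcal R$ is clean and goes by induction on $n$: if both $G$ and $\overline G$ are connected then $G\in\mathcal M\subseteq\mathcal R$; if $G$ is disconnected, write $G=G_1\sqcup G_2$, note that any $A\in\S(G)$ is block diagonal $A=A_1\oplus A_2$, and read off from the diagonal blocks of $AB=O$ that $A_iB_i=O$, so each component is complementary vanishing and lies in $\mathcal R$ by induction, whence $G\in\mathcal R$; the case $\overline G$ disconnected is the complementary statement. The exclusion $G\notin\mathcal C$ is the contrapositive of an obstruction lemma: if, say in $G$, there are $u,v,w$ with $v,w\notin N(u)$, $N(u)\subseteq N(v)$ and $N(u)\setminus N(w)=\{z\}$, then for any certificate the $(u,v)$ entry of $AB$ collapses to $A_{u,u}B_{u,v}$ (every $t\in N(u)$ is a neighbor of $v$, killing $B_{t,v}$), forcing $A_{u,u}=0$; then the $(u,w)$ entry collapses to the single surviving term $A_{u,z}B_{z,w}\neq 0$, contradicting $AB=O$.

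For the backward implication I would strengthen the inductive hypothesis. Say that $G$ is \emph{$A$-good} if it admits a certificate $A\in\S(G)$, $B\in\S(\overline G)$ with $AB=O$ in which $\mker(A)$ contains a nowhere-zero vector. The key structural lemma is that $G_1\sqcup G_2$ is complementary vanishing if and only if each $G_i$ is $A$-good: given $A$-good certificates $(A_i,B_i)$ with nowhere-zero kernel vectors $x_i\in\mker(A_i)$, the rank-one blocks $C_{ij}=x_ix_j\trans$ have all entries nonzero, are mutually transpose, and satisfy $A_iC_{ij}=O$, so placing the $B_i$ on the diagonal and the $C_{ij}$ off-diagonal assembles a $B\in\S(\overline G)$ with $AB=O$ (the off-diagonal blocks are required to be nowhere-zero precisely because $\overline G$ joins distinct components); conversely every off-diagonal block of such a $B$ is nowhere-zero and lies in $\mker$ of the corresponding diagonal block of $A$, exhibiting the required vector. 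Dually, $G_1\vee G_2$ is complementary vanishing if and only if each $G_i$ is \emph{$B$-good}, meaning $\overline G$ is $A$-good; this is just the complement of the union lemma. Since $\mathcal R$ is the closure of the complement-closed family $\mathcal M$ under disjoint union and join, every $G\in\mathcal R$ carries a canonical decomposition tree whose internal nodes are unions and joins and whose leaves are both-connected, hence lie in $\mathcal M$, and I would run the induction along this tree.

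The crux, and the step I expect to be the main obstacle, is the precise matching of $A$-goodness with the complement of $\mathcal C$. One direction is the obstruction above: a nowhere-zero kernel vector cannot coexist with a forced diagonal zero at a degree-one vertex, and such configurations are exactly what place an enclosing union into $\mathcal C$. The harder, constructive direction is to show that a complementary vanishing graph avoiding the $\mathcal C$-configurations actually admits a certificate whose kernel contains a nowhere-zero vector; this requires building the certificate while controlling which diagonal entries are forced to vanish by domination relations $N(u)\subseteq N(v)$, and tracking how $A$-goodness and $B$-goodness propagate through union and join nodes so that they fail only on the $\mathcal C$-configurations. Assembling these propagation rules into a single invariant preserved up the decomposition tree, and checking it against the definition of $\mathcal C$ at each node, is where the real work lies; the base case is the analysis of which graphs in $\mathcal M$ are $A$-good.
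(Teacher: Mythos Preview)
Your framework is exactly the paper's: the forward direction matches Proposition~\ref{prop:easyway} and Corollary~\ref{cor:noInducedPath}, your ``$A$-good'' and ``$B$-good'' are the paper's $\alpha$-robust and $\beta$-robust, and your union/join lemma is Proposition~\ref{prop:robust}. But you have misplaced the difficulty. You say the base case is ``the analysis of which graphs in $\mathcal M$ are $A$-good,'' yet this is trivial: every $G\in\mathcal M\setminus\{K_1\}$ has neither isolated nor dominating vertices, and Lemma~\ref{lem:dom} shows that any complementary vanishing graph without a dominating vertex is automatically $\alpha$-robust (dually, without isolated vertices it is $\beta$-robust). The reason is elementary: if $\overline G$ has no isolated vertex then $B$ has no zero row, so its column space contains a nowhere-zero vector (Lemma~\ref{colspace}), and that vector lies in $\mker(A)$ since $AB=O$. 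So the leaves require no analysis at all, and $K_1$ is handled directly.

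The actual work is at internal nodes where one disjoint-union factor is $K_1$: there you must show the other factor is $\beta$-robust, and Lemma~\ref{lem:dom} no longer applies. The paper isolates the obstruction via an auxiliary class $\mathcal D\subseteq\mathcal R$ (graphs with a leaf adjacent to a vertex of degree at least $2$), proves that $D\sqcup H\in\mathcal C$ for any $D\in\mathcal D$ and nonempty $H$ (Lemma~\ref{lem:danger}), and shows separately that if $G$ is complementary vanishing with no dominating vertex then $G\sqcup K_1$ is $\beta$-robust (Corollary~\ref{cor:robustlyrobust}, via an explicit augmentation of $B$ by a column $B\bx$ with both $\bx$ and $B\bx$ nowhere-zero). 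These ingredients are packaged into a four-way case split on whether $G$ or $\overline G$ lies in $\mathcal D$ (Theorem~\ref{thm:nonminTech}), which is the strengthened invariant that makes the induction close. Your outline anticipates that such propagation rules are needed but supplies none; without Lemma~\ref{lem:dom} you cannot start the induction at the leaves, and without the $\mathcal D$ mechanism and Corollary~\ref{cor:robustlyrobust} you cannot pass through a $\sqcup K_1$ node.
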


Roughly speaking, Theorem~\ref{thm:biconnected} states that a graph $G$ is complementary vanishing  if and only if $G$ can be constructed out of minimal complementary vanishing  graphs $\mathcal{M}$ while avoiding a particular combinatorial structure.  

\begin{example}
Let $G$ be the graph obtained by adding a perfect matching to $K_{4,4}$.  We will use Theorem~\ref{thm:biconnected} to determine whether $G$ is complementary vanishing  or not. 
It is easy to check that $G\notin \mathcal C$, so $G$ will be complementary vanishing  if and only if $G\in \c{R}$. Observe that
\[G=(K_2\sqcup K_2)\vee (K_2\sqcup K_2)=((K_1\vee K_1)\sqcup (K_1\vee K_1))\vee ((K_1\vee K_1)\sqcup (K_1\vee K_1)).\]
Thus, to have $G\in \c{R}$, it would suffice to have $K_1\in \c{M}$, and it is easy to verify that this is the case.  This proves that $G$ is complementary vanishing.

\end{example}
One can also use Theorem~\ref{thm:biconnected} to determine which trees are complementary vanishing.
\begin{corollary}
A tree is complementary vanishing if and only if it is a star.
\end{corollary}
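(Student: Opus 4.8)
The plan is to apply Theorem~\ref{thm:biconnected}, which says a graph is complementary vanishing exactly when it lies in $\mathcal R\setminus\mathcal C$. I would prove the two directions separately: every star lies in $\mathcal R\setminus\mathcal C$, and every non-star tree lies in $\mathcal C$ (so it is excluded from $\mathcal R\setminus\mathcal C$ regardless of whether it belongs to $\mathcal R$).

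For the forward direction, write the star on $m+1$ vertices as $K_{1,m}=K_1\vee(K_1\sqcup\cdots\sqcup K_1)$, the join of a single vertex with $m$ isolated vertices. Since the example preceding Theorem~\ref{thm:biconnected} shows $K_1\in\mathcal M\subseteq\mathcal R$ and $\mathcal R$ is closed under disjoint unions and joins, this exhibits $K_{1,m}\in\mathcal R$. It then remains to check $K_{1,m}\notin\mathcal C$, which I would do by inspecting neighborhoods in both $K_{1,m}$ and $\overline{K_{1,m}}=K_1\sqcup K_m$. In $K_{1,m}$ the center has no non-neighbors, while every leaf $u$ has $N(u)=\{c\}\subseteq N(\ell)$ for each other leaf $\ell$, so $|N(u)\setminus N(\ell)|=0$ and the value $1$ is never attained by any candidate $w$. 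In $\overline{K_{1,m}}$ every clique vertex has a single non-neighbor (the isolated vertex), so two distinct non-neighbors $v,w$ cannot be chosen, and the isolated vertex has empty neighborhood. In neither graph does the required triple exist, so $K_{1,m}\notin\mathcal C$, and Theorem~\ref{thm:biconnected} gives that stars are complementary vanishing.

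For the reverse direction, let $T$ be a tree that is not a star; equivalently $T$ has diameter at least $3$, so a longest path $v_0v_1\cdots v_k$ has $k\ge 3$. Being an endpoint of a longest path, $v_0$ is a leaf, whence $N(v_0)=\{v_1\}$. I would then take $u=v_0$, $v=v_2$, $w=v_3$ as a witness for $T\in\mathcal C$: these three vertices are distinct, both $v_2,v_3\notin N(u)=\{v_1\}$, we have $v_1\in N(v_2)$ so that $|N(u)\setminus N(v)|=0$, and $v_1\notin N(v_3)$ (otherwise $v_1v_2v_3$ would form a triangle in a tree) so that $|N(u)\setminus N(w)|=1$. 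Hence $T\in\mathcal C$, and Theorem~\ref{thm:biconnected} shows $T$ is not complementary vanishing.

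The only delicate points are bookkeeping: verifying the $\mathcal C$-condition for a star requires examining the complement $K_1\sqcup K_m$ as carefully as $K_{1,m}$ itself, and the choice of witnesses for a non-star tree relies on taking an endpoint of a longest path (so that $v_0$ is genuinely a leaf) together with the acyclicity of $T$ (so that $v_1\not\sim v_3$). I would emphasize that the reverse direction needs only membership in $\mathcal C$, with no analysis of $\mathcal R$ at all; this sidesteps the otherwise circular task of deciding directly whether $T\in\mathcal M$.
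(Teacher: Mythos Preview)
Your proof is correct and follows essentially the same approach as the paper. Both directions use Theorem~\ref{thm:biconnected} in the same way: for a non-star tree you locate an induced $P_4$ beginning at a leaf (you do this via a longest path, the paper simply asserts such a path exists) and read off the witnesses for membership in $\mathcal C$; for a star you exhibit it as a join of $K_1$'s to place it in $\mathcal R$ and then check $K_{1,m}\notin\mathcal C$. Your verification of the latter is more detailed than the paper's, which simply says ``it is not difficult to show'' and offers as an alternative the explicit pair $A=$ adjacency matrix, $B=$ Laplacian of the complement.
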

\begin{proof}
If $T$ is a tree which is not a star, then there exists an induced path $uxvw$ in $T$ where $u$ is a leaf.  Note that $v,w\notin N(u),\ |N(u)\sm N(w)|=1$, and $|N(u)\sm N(v)|=0$.  We conclude that $T\in \mathcal{C}$, and hence $T$ is not complementary vanishing by Theorem~\ref{thm:biconnected}.

If $T$ is a star, then $T\in \c{R}$ (since $T$ is the disjoint union of $K_1$'s joined to a $K_1$) and it is not difficult to show that $T\notin \c{C}$, so Theorem~\ref{thm:biconnected} implies that $T$ is complementary vanishing.  Alternatively, one can show that $T$ is complementary vanishing directly by taking $A\in \S(T)$ to be the adjacency matrix of $T$ and $B\in \S(\overline{T})$ to be the Laplacian matrix of the complement of $T$, since in this case $AB=O$. 
\end{proof}

By Theorem~\ref{thm:biconnected}, it suffices to characterize the set of graphs $\mathcal{M}$ in order to characterize the set of complementary vanishing  graphs. 
To this end, we determine exactly which graphs on at most 8 vertices are in $\c{M}$.

\begin{theorem}\label{thm:conditions}
A graph $G$ on $n\le 8$ vertices is in $\c{M}$ if and only if it is one of the graphs listed in Appendix A.
\end{theorem}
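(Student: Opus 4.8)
The plan is to treat Theorem~\ref{thm:conditions} as a finite classification and to verify it by combining combinatorial obstructions, a rank bound, and explicit matrix witnesses, arranged so as to minimize the amount of ad hoc case analysis. First I would shrink the search space. Since $A,B$ are symmetric, $AB=O$ forces $BA=(AB)^\top=O$, so being complementary vanishing is invariant under complementation; as the conditions ``connected'' and ``co-connected'' are also complement-symmetric, we have $G\in\c M$ if and only if $\overline G\in\c M$. Hence it suffices to inspect one representative from each complementation pair among the connected, co-connected graphs on $n\le 8$ vertices, a finite list obtainable up to isomorphism. The requirement that both $G$ and $\overline G$ be connected already discards every nontrivial join and every nontrivial disjoint union.

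Next I would apply two necessary conditions to eliminate as many candidates as possible before any per-graph work. The first is the combinatorial obstruction supplied by the forward direction of Theorem~\ref{thm:biconnected}: a complementary vanishing graph cannot lie in $\c C$, so any candidate in $\c C$ is immediately excluded. The second is a rank bound: if $A\in\S(G)$ and $B\in\S(\overline G)$ satisfy $AB=O$ then $\operatorname{range}(B)\subseteq\mker(A)$, so $\rank A+\rank B\le n$; combining this with $\rank A\ge\mathrm{mr}(G)$ and $\rank B\ge\mathrm{mr}(\overline G)$ yields the necessary condition $\mathrm{mr}(G)+\mathrm{mr}(\overline G)\le n$. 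Computing the minimum ranks of these small graphs, or adequate lower bounds for them, should rule out a large share of the candidates not already caught by $\c C$.

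For the survivors I would separate the complementary vanishing graphs from the rest. In the positive cases the cleanest route is to exhibit witnesses: if we fix a subspace $U\subseteq\mathbb R^n$ and find symmetric $A$ with $\operatorname{range}(A)\subseteq U$ and symmetric $B$ with $\operatorname{range}(B)\subseteq U^\perp$, then $U^\perp\subseteq\mker(A)$ and automatically $AB=O$, so the only remaining task is to realize the prescribed off-diagonal patterns of $A$ and $B$ inside these range constraints. Such matrices can be produced by a parametrized symbolic (or numerical-then-rationalized) search and then checked by direct multiplication, and they would be recorded in Appendix~A. The remaining, and hardest, step is certifying \emph{non}-existence for the candidates that survive both filters yet are not complementary vanishing. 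I do not expect a single clean combinatorial reason here, so I would attack the bilinear system $AB=O$ together with the support constraints directly for each such graph, showing by elimination (e.g.\ a Gr\"obner basis computation) that every solution forces some required-nonzero entry to vanish, contradicting membership in $\S(G)$ or $\S(\overline G)$. Making these non-existence certificates rigorous and checkable---an empty real solution set or a forced pattern violation, rather than a merely failed numerical search---is the main obstacle and the crux of the argument.
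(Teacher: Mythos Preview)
Your plan is sound in outline and broadly parallel to the paper's, but the specific filters you propose differ from what the paper actually uses, and one of them is speculative.

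On the negative side, the paper does not use the rank bound $\mathrm{mr}(G)+\mathrm{mr}(\overline G)\le n$ at all. Instead, after the $\c C$ obstruction (which, as you note, is Corollary~\ref{cor:noInducedPath}), the paper applies finer diagonal constraints from Lemmas~\ref{prop.subneighborhood} and~\ref{prop.one.private.neighbor} together with an odd-cycle sign argument (Lemma~\ref{lem:oddcycle}) to eliminate nine more pairs, leaving exactly four pairs on eight vertices to be handled by Gr\"obner basis computations. Your rank filter is a valid necessary condition, but you have not checked that it actually disposes of enough of the residual candidates; if it does not, you may be left with more Gr\"obner computations than the paper, and some of those (the paper reports) are already delicate enough to require careful variable reduction via spanning-tree normalization and diagonal presetting before the basis computation terminates.

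On the positive side, the paper does not rely on a generic subspace-plus-pattern search. It first exploits two structural shortcuts---every vertex having a twin (Proposition~\ref{prop:twin}) and vertex duplication from a smaller witness (Lemma~\ref{lem:duplication})---which together certify most of the complementary vanishing pairs. The remaining ones are handled by a targeted random search: fix $A\in\S(G)$, compute the linear space $\{B\in\Scl(\overline G):AB=O\}$, and check whether a generic element hits all required nonzeros (Proposition~\ref{prop:solveB}). One pair ($G_8$) resisted this and required a hand-built certificate. Your approach would work in principle, but the paper's structural lemmas buy a substantial reduction in the number of graphs needing per-graph witness construction.

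In short: both routes arrive at the same endpoint (combinatorial filters, explicit witnesses, algebraic non-existence), but the paper's intermediate tools---the odd-cycle lemma for obstructions and the twin/duplication lemmas for constructions---are sharper than your rank bound and generic subspace search, and the paper's Table~\ref{tab:summary} records exactly how the casework splits.
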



A precise break down of the number of pairs $\{G,\overline{G}\}$ of connected graphs on $n\leq 8$ vertices which are complementary vanishing  is given in Table \ref{tab:complementary vanishing ornotcomplementary vanishing }.

\begin{table}[h]
\begin{center}
\begin{tabular}{c|c|c|c}
 $n$ & not complementary vanishing & complementary vanishing  & total pairs\\
 \hline
 1 & 0 & 1 & 1\\
 2 & 0 & 0 & 0\\
 3 & 0 & 0 & 0\\
 4 & 1 & 0 & 1 \\
 5 & 5 & 0 & 5\\
 6 & 34 & 0 & 34\\
 7 & 327 & 4 & 331 \\
 8 & 4917 &  32 & 4949\
\end{tabular}
\end{center}
\caption{Each row breaks down the number of pairs $\{G,\overline{G}\}$ which are complementary vanishing, where $G$ and $\overline{G}$ are connected graphs on $n$ vertices.}
\label{tab:complementary vanishing ornotcomplementary vanishing }
\end{table}

\textbf{Organization and Notation.}
The rest of the paper is organized as follows.  In Section~\ref{Sec2}, we give  combinatorial conditions for when a graph is complementary vanishing  or not. In Section~\ref{Sec3}, we prove Theorem~\ref{thm:biconnected} by studying a slightly stronger notion of being complementary vanishing.  
In Section~\ref{Sec4}, we prove Theorem~\ref{thm:conditions} and describe several algorithms which can be used to test whether a graph is complementary vanishing  or not.
We close with a few open problems in Section~\ref{Sec5}. 

Throughout we use standard notation from graph theory and linear algebra.  Given a graph $G$, we define the \textit{open neighborhood} $N_G(v)$ to be the set of vertices adjacent to $v$ in $G$, and the \textit{closed neighborhood} $N_G[v]=N_G(v)\cup \{v\}$. We will drop the subscript on $N$ when the graph $G$ is clear from context.

  We write $O_{m\times n}$ for the $m\times n$ zero matrix, which we denote simply by $O$ whenever the dimensions are clear from context.  We use bold lower case letters to denote vectors, and in particular we write $\bzero$ and $\bone$ to denote the all 0's and all 1's vectors.

\section{Combinatorial Conditions for Complementary Vanishing Graphs}\label{Sec2}

The results in this section use structural properties of graphs to determine whether it is possible for a graph to be complementary vanishing. 
Most of our conditions will show that a graph cannot be complementary vanishing since the diagonal entries for $A$ and $B$ are over-constrained. 
The following observation leads to our first condition for the diagonal entries of $A$ and $B$.   Here and throughout the paper, if $S$ is a set of vertices of a graph $G$ then $\overline{S}:=V(G)\setminus S$.

\begin{observation}\label{obs:disjointNeighbors}
If $v\in V(G)$, then $N_{\overline{G}}[v] = \overline{N_G(v)}$ and $N_{\overline{G}}(v) = \overline{N_G[v]}$.
\end{observation}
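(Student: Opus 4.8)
The plan is to verify both set equalities directly from the definitions of the complement and of open and closed neighborhoods, keeping careful track of whether $v$ itself belongs to each set. The one fact I will use repeatedly is that $G$ has no loops, so $v\notin N_G(v)$, together with the defining property of the complement: for distinct vertices $u$ and $v$ we have $uv\in E(\overline{G})$ if and only if $uv\notin E(G)$.

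First I would establish the second identity $N_{\overline{G}}(v)=\overline{N_G[v]}$, since it is the cleaner of the two. A vertex $u$ lies in $N_{\overline{G}}(v)$ precisely when $u\neq v$ and $uv\notin E(G)$, i.e.\ precisely when $u\neq v$ and $u\notin N_G(v)$. That is exactly the condition $u\notin N_G(v)\cup\{v\}=N_G[v]$, so $N_{\overline{G}}(v)=V(G)\setminus N_G[v]=\overline{N_G[v]}$.

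For the first identity $N_{\overline{G}}[v]=\overline{N_G(v)}$, I would pass to the closed neighborhood in $\overline{G}$ by adjoining $v$ to the set just computed. Since $N_{\overline{G}}[v]=N_{\overline{G}}(v)\cup\{v\}$ and the loopless assumption gives $v\notin N_G(v)$, hence $v\in\overline{N_G(v)}$, combining the previous step with this insertion of $v$ yields $N_{\overline{G}}[v]=\{v\}\cup\overline{N_G[v]}=\overline{N_G(v)}$. Alternatively one checks directly that $u\in N_{\overline{G}}[v]$ iff $u=v$ or $u\notin N_G(v)$, and since $u=v$ already satisfies $u\notin N_G(v)$, this collapses to $u\notin N_G(v)$, i.e.\ $u\in\overline{N_G(v)}$.

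There is no genuine obstacle here: the statement is a bookkeeping identity, and the only point requiring care is the treatment of $v$ itself, which is excluded from the open neighborhood $N_G(v)$ but is pulled into the complement $\overline{N_G(v)}$ via the loopless assumption. Verifying that $v$ lands on the correct side of each equality is the whole content of the argument.
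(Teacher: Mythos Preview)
Your proof is correct. The paper states this as an observation without proof, so there is nothing to compare against; your direct verification from the definitions, with the careful handling of whether $v$ itself lies in each set, is exactly the intended content.
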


\begin{lemma}\label{one.diag.ent.zero}
Let $A\in\mathcal S(G)$, $B\in\mathcal S(\overline G)$. 
If $AB = O$, then $a_{i,i}$ or $b_{i,i}$ is $0$ for each $1\leq i \leq n$.
\end{lemma}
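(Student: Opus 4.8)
The plan is to extract the desired conclusion by examining a single diagonal entry of the product $AB$. Fix $i$ with $1 \le i \le n$. Since $AB = O$, in particular the $(i,i)$ entry of $AB$ vanishes, so I would write
\[
0 = (AB)_{i,i} = \sum_{k=1}^n a_{i,k}\, b_{k,i}.
\]
The strategy is then to show that every term of this sum with $k \ne i$ is identically zero, so that the entire sum collapses to the single diagonal term $a_{i,i}\, b_{i,i}$.

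The key observation driving the argument is that for $k \ne i$ the two factors $a_{i,k}$ and $b_{k,i}$ can never both be nonzero. Indeed, $G$ and $\overline{G}$ partition the non-edges and edges on the pair $\{v_i, v_k\}$: the pair $v_i v_k$ is an edge of exactly one of $G$ and $\overline{G}$. If $v_i v_k \in E(G)$, then $v_i v_k \notin E(\overline{G})$, so by the definition of $\S(\overline{G})$ we have $b_{k,i} = 0$; conversely, if $v_i v_k \notin E(G)$, then the definition of $\S(G)$ forces $a_{i,k} = 0$. In either case $a_{i,k}\, b_{k,i} = 0$ whenever $k \ne i$. (Note that this uses only the off-diagonal structure imposed by $\S(\cdot)$, and places no restriction on diagonal entries, which is exactly why the diagonal term survives.)

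Consequently the sum above reduces to
\[
0 = (AB)_{i,i} = a_{i,i}\, b_{i,i},
\]
and since this is a product of two real numbers equal to zero, at least one of $a_{i,i}$ or $b_{i,i}$ must be $0$. As $i$ was arbitrary, this holds for every $1 \le i \le n$, completing the proof. I do not anticipate a genuine obstacle here: the entire content is the elementary combinatorial dichotomy that each off-diagonal pair lies in exactly one of $G$ or $\overline{G}$, which kills all off-diagonal contributions to the diagonal of $AB$. The only point to state carefully is that symmetry of $A$ and $B$ is not even needed for the cancellation, so the argument should be presented purely through the off-diagonal zero pattern.
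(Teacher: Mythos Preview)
Your proof is correct and is essentially the same as the paper's: both compute the $(i,i)$ entry of $AB$ and observe that every off-diagonal term $a_{i,k}b_{k,i}$ vanishes because $\{v_i,v_k\}$ is an edge of exactly one of $G$ and $\overline{G}$. The only cosmetic difference is that the paper phrases the vanishing via $N_G(i)\cap N_{\overline{G}}(i)=\emptyset$, while you argue it by a direct case split on each $k$.
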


\begin{proof}
Because $AB=O$, taking the dot product of the $i\text{th}$ row of $A$ and the $i\text{th}$ column of $B$ gives
\[
0 = \sum_{1\leq k\leq n} a_{i,k}b_{k,i}= a_{i,i}b_{i,i} + \sum_{k\in N_G(i)\cap N_{\overline{G}}(i)} a_{i,k}b_{k,i}
= a_{i,i}b_{i,i} + \sum_{k\in N_G(i)\cap \overline{N_{G}[i]}} a_{i,k}b_{k,i},
\]
where the second equality used, for example, that $a_{i,k}=0$ if $k\notin N_G[i]$ by definition of $\mathcal{S}(G)$, and the third used Observation~\ref{obs:disjointNeighbors}.  Since $N_G(i)\cap\overline{N_G[i]} = \emptyset$, this reduces to
\[
0=a_{i,i}b_{i,i}.
\]
This implies $a_{i,i} = 0$ or $b_{i,i} = 0$.
\end{proof}

The key fact in proving Lemma \ref{one.diag.ent.zero} is that $N_G(i)$ is contained in $N_{G}[i]$ (since this made the final sum equal to 0).
This step in the proof can be generalized to pairs of vertices $i\neq j$ when the closed neighborhood of $j$ contains the neighborhood of $i$.

\begin{lemma}\label{prop.subneighborhood}
Let $A\in\mathcal S(G)$ and $B\in\mathcal S(\overline G)$ be such that $AB = O$. For distinct vertices $i,j\in V(G)$, if $N_G(i)\subseteq N_G[j]$, then we have the following.
\begin{enumerate}
\item[(1)] If $i\sim_G j$
, then $b_{j,j} = 0$.
\item[(2)] If $i\not\sim_G j$
, then $a_{i,i}=0$.
\end{enumerate}
\end{lemma}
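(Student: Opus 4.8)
The plan is to mimic the proof of Lemma~\ref{one.diag.ent.zero}, except that instead of examining a diagonal entry of $AB$ I would examine the off-diagonal $(i,j)$ entry. Since $AB = O$, taking the dot product of the $i$th row of $A$ with the $j$th column of $B$ gives
\[
0 = \sum_{1 \leq k \leq n} a_{i,k}\, b_{k,j}.
\]
First I would prune this sum. By the definition of $\mathcal{S}(G)$ the factor $a_{i,k}$ vanishes unless $k \in N_G[i]$, and by the definition of $\mathcal{S}(\overline{G})$ together with Observation~\ref{obs:disjointNeighbors} the factor $b_{k,j}$ vanishes unless $k \in N_{\overline{G}}[j] = \overline{N_G(j)}$, i.e.\ unless $k \notin N_G(j)$. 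Hence only indices $k \in N_G[i]\setminus N_G(j)$ contribute.

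The key step is to use the hypothesis $N_G(i)\subseteq N_G[j]$ to show that this surviving index set is a single vertex, and to identify which one. Writing $N_G[i] = N_G(i)\cup\{i\}$, every $k \in N_G(i)$ lies in $N_G[j] = N_G(j)\cup\{j\}$, so such a $k$ is killed unless $k = j$; the fate of the remaining index $k = i$ is then decided by whether or not $i\sim_G j$. This naturally splits the argument into the two stated cases.

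In case (1), where $i\sim_G j$, I would note that $j\in N_G(i)$ while $j\notin N_G(j)$, so $j$ survives, whereas $i\in N_G(j)$ is killed; thus $N_G[i]\setminus N_G(j) = \{j\}$ and the sum collapses to $a_{i,j}\,b_{j,j} = 0$. Since $i\sim_G j$ forces $a_{i,j}\neq 0$, we conclude $b_{j,j} = 0$. In case (2), where $i\not\sim_G j$, the vertex $j$ is no longer in $N_G(i)$, so all of $N_G(i)$ is killed, while $i\notin N_G(j)$ now survives; thus $N_G[i]\setminus N_G(j) = \{i\}$ and the sum collapses to $a_{i,i}\,b_{i,j} = 0$. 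Since $i\not\sim_G j$ means $i\sim_{\overline{G}} j$, we have $b_{i,j}\neq 0$, forcing $a_{i,i} = 0$.

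The computation is entirely mechanical once the surviving index set has been correctly identified, so I expect the only real obstacle to be the bookkeeping that pins $N_G[i]\setminus N_G(j)$ down to exactly one vertex in each case. This is precisely where the hypothesis $N_G(i)\subseteq N_G[j]$ does all the work, and it is the step I would be most careful to verify, paying particular attention to the roles of $i$ and $j$ themselves (rather than just the neighbors of $i$) since these are the two indices whose membership in $N_G(j)$ flips between the two cases.
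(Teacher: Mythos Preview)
Your proposal is correct and follows essentially the same approach as the paper: examine the $(i,j)$ entry of $AB$, use the hypothesis $N_G(i)\subseteq N_G[j]$ to collapse the sum to the two terms $a_{i,i}b_{i,j}+a_{i,j}b_{j,j}$, and then read off the desired vanishing in each case. The only cosmetic difference is that the paper separates out the $k=i$ and $k=j$ terms first and observes $N_G(i)\cap\overline{N_G[j]}=\emptyset$ for the remainder, whereas you compute the surviving index set $N_G[i]\setminus N_G(j)$ directly and identify its single element; the content is the same.
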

\begin{proof}
Let $i,j$ be distinct vertices with $N_G(i) \subseteq N_G[j]$. Then we have $N_G(i) \cap \overline{N_G[j]} = \emptyset$. 
Taking the dot product of the $i\text{th}$ row of $A$ and $j\text{th}$ column of $B$ gives
\begin{align*}
0 &= a_{i,i}b_{i,j} + a_{i,j}b_{j,j} + \sum_{k\in N_G(i)\cap \overline{N_{G}[j]}} a_{i,k}b_{k,j} \\
&= a_{i,i}b_{i,j} + a_{i,j}b_{j,j}.
\end{align*}

Suppose $i\sim_G j$. Then $a_{i,j}\neq0$ and $b_{i,j}=0$ (since $i\not\sim_{\overline G} j$).
Therefore, $a_{i,j}b_{j,j} = 0$, which implies that $b_{j,j} = 0$. A completely symmetric argument gives the result when $i\not\sim_G j$.
\end{proof}

We can also extract information from the dot product of the $i\text{th}$ row of $A$ and the $j$th column of $B$ if we ``almost'' have $N_G(i)\subseteq N_G[j]$. 


\begin{lemma}\label{prop.one.private.neighbor}
Let $A\in\mathcal S(G)$ and $B\in\mathcal S(\overline{G})$ be such that $AB = O$. For distinct vertices $i,j\in V(G)$, if $N_G(i)\setminus N_G[j] = \{v\}$ for some $v\in V(G)$, then we have the following. 
\begin{enumerate}
\item[(1)] If $i\sim_G j$
, then $b_{j,j} \neq 0$ and $a_{j,j} = 0$.
\item[(2)] If $i\not\sim_G j$
, then $a_{i,i} \neq 0$ and $b_{i,i} = 0$.
\end{enumerate}
\end{lemma}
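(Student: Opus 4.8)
The plan is to reuse the dot-product computation from the proof of Lemma~\ref{prop.subneighborhood}, but now to track the single extra term that the hypothesis $N_G(i)\setminus N_G[j]=\{v\}$ leaves behind. First I would expand the $(i,j)$ entry of $AB=O$ as
\[
0=\sum_{1\le k\le n}a_{i,k}b_{k,j}=a_{i,i}b_{i,j}+a_{i,j}b_{j,j}+\sum_{k\in N_G(i)\cap \overline{N_G[j]}}a_{i,k}b_{k,j},
\]
using that $a_{i,k}=0$ unless $k\in N_G[i]$, that $b_{k,j}=0$ unless $k\in N_{\overline{G}}[j]$, and that $N_{\overline{G}}(j)=\overline{N_G[j]}$ by Observation~\ref{obs:disjointNeighbors}, exactly as in the earlier proofs. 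The point is that the index set of the remaining sum is $N_G(i)\cap\overline{N_G[j]}=N_G(i)\setminus N_G[j]=\{v\}$; moreover $v\in N_G(i)$ forces $v\neq i$ and $v\notin N_G[j]$ forces $v\neq j$, so this term is genuinely distinct from the two that were pulled out, and the identity becomes $0=a_{i,i}b_{i,j}+a_{i,j}b_{j,j}+a_{i,v}b_{v,j}$.

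The heart of the argument, and the feature that distinguishes this lemma from Lemma~\ref{prop.subneighborhood}, is the observation that the surviving cross term is \emph{nonzero}: since $v\in N_G(i)$ we have $a_{i,v}\neq 0$, and since $v\notin N_G[j]$ we have $v\sim_{\overline{G}}j$ and hence $b_{v,j}\neq 0$. Whereas in Lemma~\ref{prop.subneighborhood} the neighborhood sum vanished entirely and thereby forced a diagonal entry to be zero, here exactly one nonzero contribution remains, which will instead force a diagonal entry to be nonzero.

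Given this, I would split into the two cases. If $i\sim_G j$, then $a_{i,j}\neq 0$ while $b_{i,j}=0$ (as $i\not\sim_{\overline{G}}j$), so the identity reduces to $a_{i,j}b_{j,j}=-a_{i,v}b_{v,j}\neq 0$, and since $a_{i,j}\neq 0$ this yields $b_{j,j}\neq 0$. The symmetric case $i\not\sim_G j$ gives $a_{i,j}=0$ and $b_{i,j}\neq 0$, so the identity becomes $a_{i,i}b_{i,j}=-a_{i,v}b_{v,j}\neq 0$, whence $a_{i,i}\neq 0$. Finally, to obtain the companion ``equals zero'' conclusions ($a_{j,j}=0$ in case~(1), $b_{i,i}=0$ in case~(2)), I would invoke Lemma~\ref{one.diag.ent.zero}, which guarantees that for each index at least one of the two diagonal entries vanishes; combined with the nonvanishing just established, this pins down the other. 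I do not expect any real obstacle: the only point requiring care is the verification that $v\notin\{i,j\}$, so that the remaining sum genuinely isolates a single cross term with a controllable sign.
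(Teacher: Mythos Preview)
Your proposal is correct and follows essentially the same approach as the paper: expand the $(i,j)$ entry of $AB$, isolate the single surviving cross term $a_{i,v}b_{v,j}$ coming from $N_G(i)\setminus N_G[j]=\{v\}$, note that it is nonzero, and in each case read off the nonvanishing of the appropriate diagonal entry, then invoke Lemma~\ref{one.diag.ent.zero} for the companion vanishing. Your explicit check that $v\notin\{i,j\}$ is a small clarification beyond what the paper writes, but otherwise the arguments are identical.
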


\begin{proof}
Let $N_G(i) \setminus N_G[j] = \{v\}$. Then $N_G(i) \cap \overline{N_G[j]} = \{v\}$ and taking the dot product of the $i\text{th}$ row of $A$ and the $j\text{th}$ column of $B$ gives
\begin{align*}
0 &= a_{i,i}b_{i,j} + a_{i,j}b_{j,j} + \sum_{k\in N_G(i)\cap \overline{N_{G}[j]}} a_{i,k}b_{k,j} \\
&= a_{i,i}b_{i,j} + a_{i,j}b_{j,j} + a_{i,v}b_{v,j}
\end{align*}

Suppose $i\sim_G j$. Then we have that $a_{i,j}\neq0$ and $b_{i,j}=0$ (since $i\not\sim_{\overline G} j$). 
This implies that $a_{i,j}b_{j,j} + a_{i,v}b_{v,j} = 0$ where  $a_{i,j}\neq0$, $a_{i,v}\neq0$, and $b_{v,j}\neq0$. 
Solving for $b_{j,j}$ shows that  $b_{j,j}\neq0$. 
Moreover, $a_{j,j} = 0$ by Lemma \ref{one.diag.ent.zero}.  A completely symmetric argument gives the result when $i\not \sim_G j$.
\end{proof}

Applying both Lemmas \ref{prop.subneighborhood} and \ref{prop.one.private.neighbor} can lead to contradictions along the diagonal of $A$. 
This gives the following corollary, which helps motivate the set $\mathcal{C}$ defined in the introduction.

\begin{corollary}\label{cor:noInducedPath}
Let $G$ be a graph such that there exist distinct vertices $u,v,w$ with $v,w\notin N(u)$, $|N(u)\setminus N(w)|=1$, and $|N(u)\setminus N(v)|=0$.  
Then $G$ is not complementary vanishing.
\end{corollary}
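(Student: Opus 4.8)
The plan is to derive a contradiction about the single diagonal entry $a_{u,u}$ by feeding the vertex pairs $(u,v)$ and $(u,w)$ into Lemmas~\ref{prop.subneighborhood} and~\ref{prop.one.private.neighbor}, respectively. I would suppose toward a contradiction that $G$ is complementary vanishing, so that there exist $A\in\S(G)$ and $B\in\S(\overline{G})$ with $AB=O$. The first task is to translate the three hypotheses, which are phrased in terms of open neighborhoods, into the closed-neighborhood containments required by the two lemmas.

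Since $v,w\notin N(u)$, neither $uv$ nor $uw$ is an edge of $G$, so $u\not\sim_G v$ and $u\not\sim_G w$; this is exactly what places us in case (2) of both lemmas. The condition $|N(u)\setminus N(v)|=0$ says $N_G(u)\subseteq N_G(v)\subseteq N_G[v]$, so Lemma~\ref{prop.subneighborhood}(2) applies to the pair $(u,v)$ and yields $a_{u,u}=0$. For the pair $(u,w)$, I would first observe that because $w\notin N_G(u)$ we have $N_G(u)\setminus N_G[w]=N_G(u)\setminus N_G(w)$; the hypothesis $|N(u)\setminus N(w)|=1$ then says this set is a single vertex. Hence Lemma~\ref{prop.one.private.neighbor}(2) applies to $(u,w)$ and yields $a_{u,u}\neq0$. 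Combining the two conclusions gives $a_{u,u}=0$ and $a_{u,u}\neq0$ simultaneously, a contradiction; therefore no such pair $A,B$ can exist and $G$ is not complementary vanishing.

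The computations are entirely routine once the two lemmas are in hand, so I do not anticipate a serious obstacle. The only point requiring care is the bookkeeping in the previous paragraph, namely verifying that the open-neighborhood hypotheses genuinely produce the closed-neighborhood containment $N_G(u)\subseteq N_G[v]$ and the singleton $N_G(u)\setminus N_G[w]$ demanded by the lemmas. The crucial observation making this go through is that $u$ is non-adjacent to both $v$ and $w$, which is precisely why passing from open to closed neighborhoods (i.e.\ adjoining $v$ or $w$) leaves the relevant set differences unchanged.
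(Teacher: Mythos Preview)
Your proposal is correct and follows essentially the same argument as the paper: assume $AB=O$, apply Lemma~\ref{prop.subneighborhood}(2) to the pair $(u,v)$ to get $a_{u,u}=0$, apply Lemma~\ref{prop.one.private.neighbor}(2) to the pair $(u,w)$ to get $a_{u,u}\neq 0$, and conclude with the contradiction. Your explicit verification that the open-neighborhood hypotheses translate into the closed-neighborhood conditions required by the lemmas (using $v,w\notin N_G(u)$) is slightly more detailed than the paper's write-up, but the underlying proof is identical.
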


\begin{proof}
For the sake of contradiction suppose that $G$ is complementary vanishing with matrices $A\in \mathcal S(G)$ and $B\in \mathcal S(\overline G).$
Notice that  $|N(u)\setminus N(w)|=1$ and $u\not\sim w$ implies $a_{u,u}\neq 0$ by Lemma \ref{prop.one.private.neighbor}.  
Furthermore, $N(u)\subseteq N(v)$ and $u\not\sim v$ implies $a_{u,u}=0$ by Lemma \ref{prop.subneighborhood}.
This is a contradiction,  so it follows that $G$ is not complementary vanishing.
\end{proof}

Lemma \ref{one.diag.ent.zero}, Lemma \ref{prop.subneighborhood}, and Lemma \ref{prop.one.private.neighbor}  are the elementary calculations we use to show that particular graphs are not complementary vanishing.
One improvement on these elementary calculations is to look for induced cycles with particular neighborhood properties. 

\begin{lemma}
\label{lem:oddcycle}
Let $G$ be a graph that has an induced odd cycle $C$ on vertices $1,2,\ldots,2k+1$ and a vertex $v$ such that $1,2,\ldots,2k+1\notin N(v)$ and \[\bigcup_{1\le i\leq 2k+1} N(i)\setminus \{1,\ldots,2k+1\} \subseteq N(v).\] 
If $A\in\mathcal S(G)$ and $B\in\mathcal S(\overline{G})$ satisfy $AB=O$, then $a_{i,i}\neq 0$ for some $i\in \{1,2,\ldots,2k+1\}$.
\end{lemma}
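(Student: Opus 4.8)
The plan is to extract, for each cycle vertex $i$, the scalar equation coming from the $(i,v)$ entry of $AB = O$, and to show that these $2k+1$ equations are jointly inconsistent with all the diagonal entries $a_{i,i}$ vanishing. First I would expand $0 = (AB)_{i,v} = \sum_{\ell} a_{i,\ell} b_{\ell,v}$ and determine which summands survive. A term is nonzero only if $a_{i,\ell}\neq 0$ (so $\ell = i$ or $\ell\in N_G(i)$) and $b_{\ell,v}\neq 0$ (so $\ell = v$ or $\ell\in N_{\overline G}(v)=\overline{N_G[v]}$). Since $i\notin N(v)$ we have $v\notin N_G(i)$, killing the $\ell=v$ term, and since the cycle is induced the only cycle-neighbors of $i$ are $i-1$ and $i+1$. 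The hypothesis $\bigcup_j N(j)\setminus\{1,\dots,2k+1\}\subseteq N(v)$ forces every non-cycle neighbor of $i$ to lie in $N(v)\subseteq N_G[v]$, so those terms vanish as well. What remains is $0 = a_{i,i}b_{i,v} + a_{i,i-1}b_{i-1,v} + a_{i,i+1}b_{i+1,v}$ for each $i$ (indices mod $2k+1$).

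The next step is to record which factors are guaranteed nonzero. The cycle edges give $a_{i,i-1},a_{i,i+1}\neq 0$. Moreover every cycle vertex $j$ satisfies $j\notin N(v)$ and $j\neq v$, hence $j\in\overline{N_G[v]}=N_{\overline G}(v)$, so $j\sim_{\overline G} v$ and $b_{j,v}\neq 0$. Abbreviating $\beta_j := b_{j,v}\neq 0$ and $w_j := a_{j,j+1}\neq 0$, I would then assume for contradiction that $a_{i,i}=0$ for every cycle vertex $i$. The surviving equations collapse to $w_{i-1}\beta_{i-1} + w_i\beta_{i+1}=0$, equivalently $\beta_{i+1} = -\tfrac{w_{i-1}}{w_i}\beta_{i-1}$, a two-step recurrence around the cycle.

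Finally I would close the loop. Because $2k+1$ is odd, stepping by $2$ in the index visits every residue mod $2k+1$, so iterating the recurrence $2k+1$ times returns to the starting vertex and yields $\beta_1 = \left(\prod_t -\tfrac{w_{j_t}}{w_{j_t+1}}\right)\beta_1$, where $j_t$ runs over all residues. The telescoping ratio of edge weights equals $1$ (numerator and denominator are each $\prod_j w_j$), while the accumulated signs contribute $(-1)^{2k+1}=-1$. Hence $\beta_1 = -\beta_1$, forcing $\beta_1 = b_{1,v}=0$, which contradicts $b_{1,v}\neq 0$. Therefore some $a_{i,i}\neq 0$.

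The main obstacle—and the place where the odd-cycle hypothesis is essential—is this last step: one must verify that traversing the cycle with a step of $2$ really does cycle through every vertex and that the weight ratios cancel perfectly, leaving only the sign $(-1)^{2k+1}$. For an even cycle this parity would give $+1$ and no contradiction, which is exactly why the statement is restricted to odd cycles. Care is also needed in the bookkeeping of the first step to confirm that no stray off-cycle neighbor of $i$ survives in the sum, which is precisely where the inducedness of $C$ and the containment hypothesis on $N(v)$ are used.
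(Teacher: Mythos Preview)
Your proof is correct and follows essentially the same approach as the paper: both expand $(AB)_{i,v}$, use the hypotheses to reduce it to $a_{i-1,i}b_{i-1,v}+a_{i,i+1}b_{i+1,v}=0$ under the assumption $a_{i,i}=0$, and then extract a parity contradiction from the odd length of the cycle. The only cosmetic difference is in the final bookkeeping: the paper interleaves the $a$'s and $b$'s into a single sequence $c_j$ and observes that $\prod_j \operatorname{sgn}(c_jc_{j+1})=(-1)^{2k+1}$ contradicts $\operatorname{sgn}\prod_j c_j^2=1$, whereas you phrase the same thing as a two-step telescoping recurrence $\beta_{i+1}=-(w_{i-1}/w_i)\beta_{i-1}$ whose $(2k+1)$-fold iterate yields $\beta_1=-\beta_1$.
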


Note that $\bigcup N(i)\setminus\{1,\ldots,2k+1\} \subseteq N(v)$ is equivalent to saying that $v$ is adjacent to the entire ``external neighborhood'' of $\{1,\ldots,2k+1\}$. 

\begin{proof}
Assume for the sake of contradiction that there exist such $A,B$ with $a_{i,i}=0$ for all $i\in\{1,\dots,2k+1\}$. 
By taking the dot product of the $i\text{th}$ row of $A$ with the $v\text{th}$ column of $B$, we obtain (writing indices mod $2k+1$ and using the symmetry of $A$ and $B$) \[a_{i-1,i}b_{i-1,v}+a_{i,i+1}b_{i+1,v}=0,\] where we used that $a_{i,i}=0$ and the fact that the neighborhood of $v$ contains the external neighborhood of $C$.
Also note that each term in this sum is nonzero since implicitly we have assumed $v\notin \{1,\ldots,2k+1\}$. 

For convenience, define $c_{2i-1}=a_{i,i+1}$ and $c_{2i}=b_{i+1,v}$ for all $1\le i\le 2k+1$.
Notice that $c_j$ is an entry from $B$ if $j$ is even, and $c_j$ is an entry from $A$ when $j$ is odd.
The relationship above can be rewritten as $c_{2i-3}c_{2i-4}+c_{2i-1}c_{2i}=0$, where the indices of $c$ are written mod $4k+2$.  
In particular, this implies that amongst all of the terms $c_j c_{j+1}$, exactly $2k+1$ of them are negative.  Thus,
\[1=\sign\left(\prod_j c_j^2\right)=\prod_j \sign(c_jc_{j+1})=(-1)^{2k+1}=-1,\]
which is a contradiction.
\end{proof}

The following example shows the power of Lemma~\ref{lem:oddcycle}.

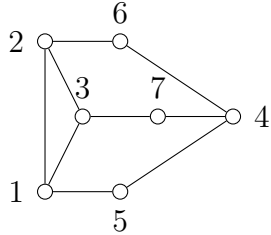
\begin{figure}[h]
\centering
\begin{tikzpicture}
\node[label={left:$1$}] (1) at (0,0) {};
\node[label={left:$2$}] (2) at (0,2) {};
\node[label={above:$3$}] (3) at (0.5,1) {};
\node[label={below:$5$}] (5) at (1,0) {};
\node[label={above:$6$}] (6) at (1,2) {};
\node[label={above:$7$}] (7) at (1.5,1) {};
\node[label={right:$4$}] (4) at (2.5,1) {};
\draw (1) -- (2) -- (3) -- (1);
\draw (1) -- (5) -- (4);
\draw (2) -- (6) -- (4);
\draw (3) -- (7) -- (4);
\end{tikzpicture}
\caption{A graph that is not complementary vanishing due to Lemma~\ref{lem:oddcycle}. Notice that Corollary~\ref{cor:noInducedPath} does not apply for this graph.}
\label{fig:oddcycle}
\end{figure}

\begin{example}
Let $G$ be the graph in Figure~\ref{fig:oddcycle}.  Suppose $A = \begin{bmatrix} a_{i,j} \end{bmatrix} \in \S(G)$ and $B = \begin{bmatrix} b_{i,j} \end{bmatrix} \in \S(\overline{G})$ are such that $AB = O$.  By applying the latter case of  Lemma~\ref{prop.one.private.neighbor} with $u = 1$ and $w = 5$, we know $b_{1,1} \neq 0$ and $a_{1,1} = 0$.  By symmetry, $a_{2,2} = a_{3,3} = 0$.  However, by Lemma~\ref{lem:oddcycle} with the odd cycle on $\{1,2,3\}$ and $v = 4$, at least one of $a_{1,1}$, $a_{2,2}$, and $a_{3,3}$ is nonzero, which is a contradiction.  Thus, $G$ is not complementary vanishing.
\end{example}

We end the section with some ways to construct complementary vanishing graphs.
We say that two vertices $u,v$ are \textit{twins} in a graph $G$ if $N(u)\sm \{v\}=N(v)\sm \{u\}$.  Note that we allow for twins to either be adjacent or nonadjacent.

\begin{proposition}
\label{prop:twin}
If $G$ is a graph such that every vertex has at least one twin,  then $G$ is complementary vanishing.
\end{proposition}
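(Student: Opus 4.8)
The plan is to exploit the twin structure directly by building a single block pattern and tensoring/direct-summing it across twin classes. First I would partition $V(G)$ into equivalence classes under the relation of being equal or twins; since every vertex has a twin, each class has size at least $2$. Within a class $T$, all vertices have the same neighborhood outside $T$, and inside $T$ the induced graph is either a clique (if the twins are adjacent) or an independent set (if nonadjacent). The guiding idea is that twin vertices give us ``free'' linear dependencies: if $u,v$ are nonadjacent twins, then rows $u$ and $v$ of any $A\in\S(G)$ agree off the diagonal, so we can choose the diagonal to make those two rows equal and hence force a rank-deficiency; dually, adjacent twins behave the same way in $\overline G$, where they become nonadjacent.

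The key construction I would carry out is to design, for each twin class, a nonzero vector in the intersection of the relevant kernels. Concretely, suppose $u,v$ are nonadjacent twins in $G$, so $a_{u,k}=a_{v,k}$ for all $k\notin\{u,v\}$ in any $A\in\S(G)$. Setting $a_{u,u}=a_{v,v}$ makes rows $u$ and $v$ of $A$ identical, so $\be_u-\be_v\in\ker A$. Meanwhile $u,v$ are adjacent in $\overline G$, and I want $\be_u-\be_v\in\ker B$ as well so that this vector is killed by both, contributing to $AB=O$. The cleanest route is to choose $B\in\S(\overline G)$ so that $\be_u-\be_v\in\ker B$ for every nonadjacent twin pair and, symmetrically, to choose $A\in\S(G)$ so that $\be_u-\be_v\in\ker A$ for every adjacent twin pair (these are nonadjacent in $\overline G$). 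The real content is arranging the two matrices so that the column space of $B$ lands inside $\ker A$, i.e. $A B = O$.

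The step I expect to be the main obstacle is producing $A$ and $B$ \emph{simultaneously} rather than just guaranteeing each has large nullity. Having $\be_u-\be_v\in\ker A$ for adjacent twins and $\be_u-\be_v\in\ker B$ for nonadjacent twins gives the right nullities, but it does not by itself give $AB=O$. To force $AB=O$ I would try to make the column space of $B$ coincide with, or sit inside, the nullspace of $A$. A natural way to do this is to work class-by-class and take $A,B$ to be direct-sum-like in a quotient: contract each twin class to a single representative to obtain a smaller ``quotient graph,'' solve a complementary-vanishing-type condition there, and then lift by assigning each representative's value uniformly (for the common part) plus a difference supported on $\be_u-\be_v$ vectors. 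I would verify that the lifted $A$ has the correct off-diagonal zero/nonzero pattern for $G$ and $B$ for $\overline G$, that the diagonal choices are consistent (here Lemma~\ref{one.diag.ent.zero} reminds me each index needs at most one of $a_{i,i},b_{i,i}$ nonzero, which the symmetric $A$-for-adjacent/$B$-for-nonadjacent split respects), and finally that $AB=O$ by checking each dot product vanishes using the twin relations. The crux is thus the bookkeeping that guarantees a single consistent assignment across all classes; once the per-class kernels are aligned so that $\operatorname{col}(B)\subseteq\ker A$, the product vanishes and $G$ is complementary vanishing.
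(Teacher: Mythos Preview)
Your proposal identifies the right starting point---the partition into twin classes of size $\ge 2$---but it does not actually build $A$ and $B$, and the mechanism you sketch for closing the gap does not work.

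The concrete failure is the ``contract to a quotient, solve there, then lift'' step. After contracting each twin class to a point you obtain a graph $G'$ on $k$ vertices with no twins at all, and there is no reason $G'$ should be complementary vanishing; for instance, if $G$ is the graph obtained from $P_4$ by duplicating every vertex (so every vertex has a twin), the quotient is $P_4$, which is in $\mathcal{C}$ and hence not complementary vanishing. So you cannot assume a solution exists on the quotient to lift from. Relatedly, your kernel bookkeeping only accounts for $n-k$ dimensions: the vectors $\be_u-\be_v$ (one per class, split between $\ker A$ and $\ker B$ according to adjacent/nonadjacent) yield $\operatorname{null}(A)+\operatorname{null}(B)\ge n-k$, not $\ge n$, and the missing $k$ dimensions are exactly what the nonexistent quotient solution was supposed to supply.

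The paper's proof avoids recursion entirely. For each class $V_i$ (with $|V_i|\ge 2$) pick a pair of \emph{orthogonal} nowhere-zero vectors $\bx^{(i)},\by^{(i)}$ supported on $V_i$. Let $P$ record which pairs of classes are joined in $G$ and set
\[
A=\sum_{(i,j)\in P}\bx^{(i)}(\bx^{(j)})^\top,\qquad
B=\sum_{(i,j)\in \overline P}\by^{(i)}(\by^{(j)})^\top.
\]
Then $A\in\S(G)$, $B\in\S(\overline G)$, and $AB=O$ simply because $(\bx^{(j)})^\top\by^{(\ell)}=0$ for every $j,\ell$. The point you were missing is that the twin hypothesis buys you, in each class, \emph{two} orthogonal nowhere-zero vectors---one to build $A$ from and one to build $B$ from---so that orthogonality alone forces $AB=O$ without any quotient or kernel-chasing.
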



\begin{proof}
Partition the vertex set of $G$ into equivalence classes of twins so that 
$V(G) =V= V_1\cup \cdots \cup V_k$ where every $u,v\in V_i$ are twins. 
By the hypothesis, $|V_i|\geq 2$ for $i=1,\dots,k$.
Let $\vec{x}^{(i)}, \vec{y}^{(i)}\in \mathbb R^{|V|}$ be orthogonal vectors that are nonzero on entries indexed by $V_i$ and zero otherwise. 
Since $|V_i|\geq 2$ for $1\leq i \leq k$, we know that the vectors $\vec{x}^{(i)},\vec{y}^{(i)}$ exist for $1\leq i \leq k$. 
Notice that $(\vec{x}^{(i)})^\top\vec {y}^{(j)}=0$ for all $1\leq i,j\leq k$.

 Let $P=\{(i,j): uv\in E(G), u\in V_i, v\in V_j\}$ and $\overline P=\{(i,j): uv\notin E(G), u\in V_i, v\in V_j\}$.  Notice that every pair $(i,j)$ with $i,j\in \{1,\ldots,k\}$ distinct appears in exactly one of the sets $P$ or $\overline{P}$ since every two vertices in $V_i$ and $V_j$ are twins.
Let 
\[A= \sum_{(i,j)\in P} \vec x^{(i)}(\vec x^{(j)})^\top\] and 
\[B= \sum_{(i,j)\in \overline{P}} \vec y^{(i)}(\vec y^{(j)})^\top,\]
where each term is a square matrix indexed by $V$ resulting from an outer product.
Notice that $A\in \mathcal S(G)$, $B\in \mathcal S(\overline{G})$, and that
\[AB= \sum_{(i,j)\in P}\sum_{(\ell,m)\in \overline{P}} \vec x^{(i)}(\vec x^{(j)})^\top\vec y^{(\ell)}(\vec y^{(m)})^\top = O.\]
This completes the proof.
\end{proof}

Proposition \ref{prop:twin} is tight in the sense that there are graphs $G$ which are not complementary vanishing where all but one vertex in $G$ has a twin. 
Indeed, consider $P_3\sqcup P_2$
with  $uxv$ the copy of $P_3$ and $yw$ the copy of $P_2$.
Then $x$ does not have a twin, $|N(u)\setminus N(w)|=1$, $|N(u)\setminus N(v)|=0$ and $v,w\notin N(u)$. 
Thus, $P_3\sqcup P_2$ is not complementary vanishing by Corollary~\ref{cor:noInducedPath}.

\begin{lemma}\label{lem:duplication}
Let $G$ be a graph with  $A\in\S(G)$ and $B\in\S(\overline{G})$ such that $AB=O$.  If $a_{i,i} = 0$, then the graph $H$ obtained by adding a new vertex $j$ with $N_{H}(j)=N_G(i)$ is complementary vanishing.
\end{lemma}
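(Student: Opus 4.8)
The plan is to construct the required matrices $A'\in\S(H)$ and $B'\in\S(\overline H)$ explicitly, by duplicating the $i$th row and column of $A$ and correcting the embedded copy of $B$ by a rank-two term. Index the vertices of $H$ by $V(G)\cup\{j\}$, write $\be_i$ for the $i$th standard basis vector of $\mathbb R^n$, and set $\vec{b}=B\be_i$ (the $i$th column of $B$). A preliminary observation, via Observation~\ref{obs:disjointNeighbors}, is that in $H$ the new vertex $j$ is a non-adjacent twin of $i$ while in $\overline H$ it is an adjacent twin of $i$; concretely one computes $N_{\overline H}(j)=V(G)\setminus N_G(i)=N_{\overline G}[i]$, which is the support the new column of $B'$ must have.

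For $A'$ I would take the duplication
\[
A'=\begin{bmatrix} A & A\be_i\\ (A\be_i)\trans & 0\end{bmatrix}.
\]
Because $a_{i,i}=0$, the $i$th entry of the column $A\be_i$ vanishes, so the $(i,j)$ entry of $A'$ is $0$ and the off-diagonal support of the new column is exactly $N_G(i)=N_H(j)$; hence $A'\in\S(H)$. The point of this choice is that $A'$ has two convenient kernel vectors. First, for every column $B\be_k$ of $B$ we have $A(B\be_k)=\bzero$ and $\be_i\trans A(B\be_k)=0$ (both since $AB=O$), so $(B\be_k;0)\in\ker A'$. Second, using $a_{i,i}=0$ once more, the vector $\vec{d}:=(\be_i;-1)$ satisfies $A'\vec{d}=(A\be_i-A\be_i;\,a_{i,i})=\bzero$, so $\vec{d}\in\ker A'$.

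For $B'$ I would then set
\[
B'=\begin{bmatrix} B & \bzero\\ \bzero\trans & 0\end{bmatrix}+\vec{d}\,\vec{g}\trans+\vec{g}\,\vec{d}\trans+\mu\,\vec{d}\,\vec{d}\trans,
\]
where $\vec{g}=(\vec{b};0)$ and $\mu$ is a scalar to be chosen. Since every column of each summand lies in $\ker A'$ (the columns of the first block are the vectors $(B\be_k;0)$, and the columns of the remaining summands lie in $\operatorname{span}\{\vec d,\vec g\}\subseteq\ker A'$), we get $A'B'=O$ immediately. It remains only to verify $B'\in\S(\overline H)$, and this is the step that forces the coefficients. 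Writing out the blocks, the top-left $n\times n$ block of $B'$ equals $B+\be_i\vec b\trans+\vec b\be_i\trans+\mu\,\be_i\be_i\trans$, whose off-diagonal entries agree with those of $B$ except in row and column $i$, where each entry $b_{k,i}$ is replaced by $2b_{k,i}$; thus its pattern is exactly that of $\overline G$. The new column is $-\vec b-\mu\be_i$, whose support is $N_{\overline G}(i)$ together with the entry $-b_{i,i}-\mu$ in position $i$; choosing $\mu\ne -b_{i,i}$ makes this last entry nonzero, so the support equals $N_{\overline G}[i]=N_{\overline H}(j)$. Hence $B'\in\S(\overline H)$, and $H$ is complementary vanishing.

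The main obstacle is exactly this pattern check for $B'$, and it rules out the two naive attempts. Taking the top-left block of $B'$ to be $B$ and the new column to be $B\be_i$ fails on two counts: the term $(A\be_i)(\text{new column})\trans$ appearing in $A'B'$ prevents the product from vanishing, and the new column can vanish in position $i$ when $b_{i,i}=0$. The rank-two correction $\vec d\,\vec g\trans+\vec g\,\vec d\trans$ is precisely what installs the correct off-diagonal support in the new column while keeping every column inside $\ker A'$; and the coefficient $1$ on this term (rather than $-1$) is exactly what prevents row and column $i$ of the top-left block from collapsing to zero, which is the delicate point to get right.
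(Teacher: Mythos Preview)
Your proof is correct. Your $A'$ coincides (up to relabeling) with the paper's, but your construction of $B'$ follows a different route: rather than symmetrically splitting vertex $i$ into two copies and computing block by block as the paper does, you keep $A$ and $B$ embedded as the principal blocks and build $B'$ by adding a rank-two symmetric correction $\vec d\,\vec g^\top+\vec g\,\vec d^\top+\mu\,\vec d\,\vec d^\top$ out of explicit kernel vectors of $A'$. This makes the verification of $A'B'=O$ immediate (each summand has columns in $\ker A'$) and isolates the pattern check as the only nontrivial step; the free parameter $\mu$ handles the one entry that could accidentally vanish. The paper's approach instead writes $A'=\begin{bmatrix}C & \bu\zeta^\top\\ \zeta\bu^\top & O\end{bmatrix}$ and $B'=\begin{bmatrix}D & \bv\zeta^\top\\ \zeta\bv^\top & E\end{bmatrix}$ with $\zeta=\frac{1}{\sqrt2}\bone$ and a case split on $E$ according to whether $b_{i,i}=0$, then verifies $A'B'=O$ by expanding the four block equations. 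Your argument is slightly more conceptual and avoids the case split; the paper's is more symmetric in the two twin vertices and generalizes transparently to duplicating a vertex several times at once.
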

\begin{proof}
Let $n = |V(G)|$.  For convenience of illustrating the matrices, we assume $i = n$ 
and write
\[ A = \begin{bmatrix}
 C & \bu \\
 \bu\trans & 0 \\
 \end{bmatrix}
 \text{ and }
 B = \begin{bmatrix}
 D & \bv \\
 \bv\trans & b \\ 
 \end{bmatrix}.\]
Thus, $AB = O$ can be expanded as 
\[\begin{aligned}
 CD + \bu\bv\trans &= O, \\
 C\bv + b\bu &= \bzero, \\
 \bu\trans D &= \bzero\trans, \\
 \bu\trans\bv &= 0.
\end{aligned}\]

Let $j = n + 1$ and $\zeta = \frac{1}{\sqrt{2}}\bone$, where $\bone$ is the all $1$'s vector in $\mathbb{R}^2$.  Then we may construct the following matrices
\[ A' = \begin{bmatrix}
 C & \bu\zeta\trans \\
 \zeta\bu\trans & O_{2,2} \\
 \end{bmatrix}
 \text{ and }
 B' = \begin{bmatrix}
 D & \bv\zeta\trans \\
 \zeta\bv\trans & E \\ 
 \end{bmatrix},\]
where $E$ is either  
\[\frac{b}{2}\begin{bmatrix} 1 & 1 \\ 1 & 1 \end{bmatrix} 
\text{ or } 
\begin{bmatrix} 1 & -1 \\ -1 & 1 \end{bmatrix}\]
depending on if $b \neq 0$ or $b = 0$.  Note that with this we always have $\zeta\trans E = b\zeta\trans$.

By direct computation of each block of $A'B'$, we have 
\[\begin{aligned}
CD + \bu\zeta\trans\zeta\bv\trans &= CD + \bu\bv\trans = O, \\
C\bv\zeta\trans + \bu\zeta\trans E &= (C\bv + b\bu)\zeta\trans = \bzero\zeta\trans = O, \\
\zeta\bu\trans D &= \zeta\bzero\trans = O, \\ 
\zeta\bu\trans\bv\zeta\trans &=  0\zeta\zeta\trans = O.
\end{aligned}\]
Along with the fact that $A'\in\S(H)$ and $B'\in\S(\overline{H})$, we have that $H$ and $\overline{H}$ are both complementary vanishing.
\end{proof}

Note that the above proof easily generalizes to duplicating a vertex any number of times or duplicating multiple vertices.

\section{Proof of Theorem~\ref{thm:biconnected}}\label{Sec3}



In order to prove Theorem~\ref{thm:biconnected}, we will  need to understand when the disjoint union and join of two graphs are complementary vanishing, and to do this we need consider  graphs which are in some sense ``robust'' with respect to being complementary vanishing.

\subsection{Robust Graphs}\label{subsection:robustgraphs}

A graph $G$ is said to be \emph{$\alpha$-robust} if there are matrices $A\in\S(G)$ and $B\in\mathcal S(\overline{G})$ such that $AB = O$ and $\mker(A)$ contains a nowhere-zero vector. 
Similarly we say that $G$ is \emph{$\beta$-robust} if there are matrices $A\in\S(G)$ and $B\in\S(\overline{G})$ such that $AB = O$ and $\mker(B)$ contains a nowhere-zero vector.   Note that $\alpha$-robust and $\beta$-robust graphs are in particular complementary vanishing.

\begin{proposition}\label{prop:robust}
Let $G$ and $H$ be two graphs.  Then the following are equivalent:
\begin{itemize}
    \item[(1a)] The graphs $G$ and $H$ are both $\alpha$-robust.
    \item[(1b)] The disjoint union $G\sqcup H$ is $\alpha$-robust.
    \item[(1c)] The disjoint union $G\sqcup H$ is complementary vanishing.
\end{itemize}
Similarly, the following are equivalent:
\begin{itemize}
    \item[(2a)] The graphs $G$ and $H$ are both $\beta$-robust.
    \item[(2b)] The join $G\vee H$ is $\beta$-robust.
    \item[(2c)] The join $G\vee H$ is complementary vanishing.
\end{itemize}
\end{proposition}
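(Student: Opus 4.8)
The plan is to prove the three statements concerning disjoint unions directly, and then obtain the three statements concerning joins by a complementation duality, so that essentially no new work is needed for the second half. The structural fact driving everything is that $\overline{G\sqcup H}=\overline{G}\vee\overline{H}$: a matrix $A\in\S(G\sqcup H)$ is forced to be block diagonal, $A=\begin{bmatrix}A_G & O\\ O & A_H\end{bmatrix}$ with $A_G\in\S(G)$ and $A_H\in\S(H)$, while a matrix $B\in\S(\overline{G}\vee\overline{H})$ has the form $B=\begin{bmatrix}B_1 & B_2\\ B_2\trans & B_3\end{bmatrix}$ with $B_1\in\S(\overline{G})$, $B_3\in\S(\overline{H})$, and crucially the off-diagonal block $B_2$ \emph{nowhere-zero}, since every vertex of $\overline{G}$ is joined to every vertex of $\overline{H}$.

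First I would dispatch the easy implications. Since $\alpha$-robust graphs are in particular complementary vanishing, (1b)$\Rightarrow$(1c) is immediate. For (1c)$\Rightarrow$(1a), I would expand $AB=O$ into its block equations; the relevant ones are $A_GB_1=O$, $A_GB_2=O$, $A_HB_2\trans=O$, and $A_HB_3=O$. The key observation is that $A_GB_2=O$ says every column of $B_2$ lies in $\mker(A_G)$, and since $B_2$ is nowhere-zero these columns are nowhere-zero vectors; together with $A_GB_1=O$ and the memberships $A_G\in\S(G)$, $B_1\in\S(\overline{G})$, this certifies that $G$ is $\alpha$-robust. The symmetric equation $A_HB_2\trans=O$ does the same for $H$. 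For the remaining implication (1a)$\Rightarrow$(1b), I would reverse this construction: given $\alpha$-robust witnesses $A_G,B_G$ for $G$ with nowhere-zero $\vec{x}_G\in\mker(A_G)$, and $A_H,B_H$ for $H$ with nowhere-zero $\vec{x}_H\in\mker(A_H)$, set $A=\begin{bmatrix}A_G & O\\ O & A_H\end{bmatrix}$ and $B=\begin{bmatrix}B_G & \vec{x}_G\vec{x}_H\trans\\ \vec{x}_H\vec{x}_G\trans & B_H\end{bmatrix}$. The rank-one off-diagonal block is nowhere-zero, so $B\in\S(\overline{G}\vee\overline{H})$; the block equations collapse to $A_G\vec{x}_G=\bzero$ and $A_H\vec{x}_H=\bzero$, which hold by choice; and $(\vec{x}_G,\vec{x}_H)$ is a nowhere-zero vector in $\mker(A)$, so $G\sqcup H$ is $\alpha$-robust.

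For the join statements (2a)--(2c), I would record the duality that a graph is $\beta$-robust if and only if its complement is $\alpha$-robust. This holds because $A,B$ symmetric gives $AB=O\iff BA=O$, so the pair $(A,B)$ witnessing $\beta$-robustness of $G$ becomes the pair $(B,A)$ witnessing $\alpha$-robustness of $\overline{G}$, with $\mker(B)$ playing the role of the nowhere-zero kernel. Since also $\overline{G\vee H}=\overline{G}\sqcup\overline{H}$ and complementary vanishing is preserved under complementation (exchanging the roles of $A$ and $B$), applying the already-proven equivalence (1a)$\Leftrightarrow$(1b)$\Leftrightarrow$(1c) to the pair $\overline{G},\overline{H}$ yields (2a)$\Leftrightarrow$(2b)$\Leftrightarrow$(2c) verbatim.

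The one genuinely load-bearing idea — and the step I would be most careful to state cleanly — is the identification in (1c)$\Rightarrow$(1a) of the columns of the nowhere-zero join block $B_2$ as the nowhere-zero kernel vectors demanded by robustness. This is precisely what forces the strengthened notion of robustness to appear automatically from mere complementary vanishing of the disjoint union, and it explains why the definitions of $\alpha$- and $\beta$-robust are set up as they are; everything else is bookkeeping with the block decomposition and the complementation duality.
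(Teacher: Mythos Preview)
Your proof is correct and essentially identical to the paper's: the same block constructions for (1a)$\Rightarrow$(1b) and the same use of the nowhere-zero off-diagonal block's columns for (1c)$\Rightarrow$(1a), followed by the same complementation duality for the join statements. The paper's write-up is slightly terser, but there is no substantive difference in strategy or in the key observation you flag.
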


\begin{proof}
Suppose $G$ and $H$ are both $\alpha$-robust. 
By definition, there exist matrices $A_G\in\S(G)$, $A_H\in\S(H)$, $B_G\in\S(\overline{G})$, $B_H\in\S(\overline{H})$ such that $A_GB_G = O$ and $A_HB_H = O$.  
Moreover, $\mker(A_G)$ and $\mker(A_H)$ contain nowhere-zero vectors $\bv_G$ and $\bv_H$, respectively.  Thus, we have 
\[A = \begin{bmatrix}
 A_G & O \\ 
 O & A_H 
\end{bmatrix}\in\S(G\sqcup H),\ 
B = \begin{bmatrix}
 B_G & \bv_G\bv_H^\top \\
 \bv_H\bv_G^\top & B_H
\end{bmatrix}\in\S(\overline{G\sqcup H}), \]
and $AB = O$.  Moreover, the vector $\bv=\begin{bmatrix} \bv_G \\ \bv_H \end{bmatrix}$ is a nowhere-zero vector in $\mker(A)$.  Thus $G\sqcup H$ is $\alpha$-robust, showing that (1a) implies (1b).

A graph being $\alpha$-robust immediately implies that it is complementary vanishing, so  (1b) implies (1c).
To show (1c) implies (1a),  suppose $G\sqcup H$ is complementary vanishing. 
Then there are matrices 
\[A = \begin{bmatrix}
 A_G & O \\ 
 O & A_H 
\end{bmatrix}\in\S(G\sqcup H) \text{ and } 
B = \begin{bmatrix}
 B_G & C \\
 C^\top & B_H
\end{bmatrix}\in\S(\overline{G\sqcup H})\]
such that $AB=O$ for some matrices $A_G\in\S(G)$, $A_H\in\S(H)$, $B_G\in\S(\overline{G})$, $B_H\in\S(\overline{H})$, and some $C$ which is nowhere-zero.  
Thus, we have $A_GB_G = O$ and $A_GC = O$.  Since any column of $C$ is a nowhere-zero vector in $\mker(A_G)$, we know $G$ is $\alpha$-robust.  
Similarly, $H$ is also $\alpha$-robust, proving that (1c) implies (1a).

Notice that $G,H$ satisfying one of (2a), (2b), or (2c) is equivalent to $\overline{G},\overline{H}$ satisfying one of (1a), (1b), or (1c).
Therefore, the equivalence of (2a), (2b), and  (2c) follows from the equivalence of (1a), (1b), and (1c).
\end{proof}

The next lemma gives an effective way for finding nowhere-zero vectors $\bx$ such that $B\bx$ is also nowhere-zero.

\begin{lemma}\label{colspace}
If $B$ is a matrix which does not contain a row of zeros, then there exists a nowhere-zero vector $\bx$ such that $B\bx$ is nowhere-zero.
\end{lemma}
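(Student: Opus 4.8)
The plan is to use the no-zero-row hypothesis to guarantee that each ``bad'' locus --- where either $\bx$ or $B\bx$ acquires a zero coordinate --- is a \emph{proper} linear subspace, and then to invoke the fact that $\mathbb{R}^n$ cannot be covered by finitely many proper subspaces. Say $B$ is $m\times n$; since $B$ has no zero row, necessarily $n\ge 1$ (an empty row would be a zero row), and for each $i$ the coefficients $b_{i,1},\dots,b_{i,n}$ of the $i$th row are not all zero. A candidate $\bx\in\mathbb{R}^n$ fails to work precisely when $x_j=0$ for some $j$ or $(B\bx)_i=\sum_{j} b_{i,j}x_j=0$ for some $i$. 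Each of these is a single linear equation $\bc\trans\bx=0$ with $\bc\neq\bzero$: for the coordinate conditions the coefficient vector is $\be_j$, and for the row conditions it is the $i$th row of $B$, which is nonzero by hypothesis. Hence the set of bad vectors is a union of $m+n$ hyperplanes through the origin.

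First I would record the standard fact that over the infinite field $\mathbb{R}$, the space $\mathbb{R}^n$ is not the union of finitely many proper subspaces; this immediately produces an $\bx$ lying outside all $m+n$ hyperplanes, and such an $\bx$ is exactly a nowhere-zero vector with $B\bx$ nowhere-zero. To make this effective, I would instead exhibit a one-parameter family and use that nonzero real polynomials have finitely many roots. Concretely, set $\bx(t)=(1,t,t^2,\dots,t^{n-1})\trans$. Then each coordinate $t^{j-1}$ is nonzero whenever $t\neq 0$, while each entry $(B\bx(t))_i=\sum_{j} b_{i,j}t^{j-1}$ is a polynomial in $t$ of degree at most $n-1$ whose coefficients are the entries of the nonzero $i$th row, so it is not the zero polynomial and has at most $n-1$ real roots. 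Avoiding the finitely many roots of all $m$ polynomials, together with $t=0$, leaves infinitely many admissible values of $t$; any such $t$ yields the desired $\bx=\bx(t)$.

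I expect no serious obstacle here: the entire content is the observation that the no-zero-row hypothesis forces each vanishing locus to be a proper hyperplane (equivalently, forces each polynomial $\sum_{j} b_{i,j}t^{j-1}$ to be nonzero), and that over the infinite field $\mathbb{R}$ such finitely many loci cannot cover the whole space. The only point requiring care is to use the hypothesis in exactly the right place --- it is precisely what guarantees the $i$th coefficient vector is nonzero; without it the corresponding ``hyperplane'' would be all of $\mathbb{R}^n$ and the covering argument would collapse.
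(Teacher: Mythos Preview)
Your argument is correct, and it takes a genuinely different route from the paper. The paper proceeds by a two-stage perturbation: first it shows the column space of $B$ contains a nowhere-zero vector (take a vector in the column space with the fewest zeros, and if it has a zero in position $i$, add a small multiple of a column that is nonzero in position $i$ to reduce the zero count), and then it repeats the same ``minimum-number-of-zeros plus $\varepsilon$-nudge'' trick on the preimage side to make $\bx$ itself nowhere-zero. Your approach instead packages the whole obstruction as a finite union of hyperplanes and avoids it in one stroke, with the Vandermonde curve $\bx(t)=(1,t,\dots,t^{n-1})\trans$ giving an explicit witness. Your version is shorter and more conceptual, and it makes transparent exactly where the no-zero-row hypothesis enters; the paper's version is more pedestrian but has the mild advantage that its perturbation step generalizes naturally when one already has a candidate vector in hand and wants to fix it without starting over.
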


\begin{proof}
Suppose that $\bv$ is a vector in the column space of $B$ with the fewest $0$ entries.
For the sake of contradiction, suppose that $\bv_i=0$ for some $i$.
Since $B$ does not contain a row of zeros, there exists some column $\bc$ of $B$ such that $\bc_i \neq 0$. 
Choose $\varepsilon>0$ so that  \[\varepsilon\max_{j}\{|\bc_j|\}< \min_{\substack{j\\\bv_j\neq 0}}\{|\bv_j|\}.\]
Notice that $\bv+\varepsilon\bc$ has fewer $0$ entries than $\bv$ by construction.
This contradiction leads to the conclusion that $\bv$ is a nowhere-zero vector.

Suppose that $\bx$ is a vector with the minimum number of $0$ entries such that $B\bx$ is nowhere-zero (this vector exists by the previous paragraph). 
For the sake of contradiction, suppose that $\bx_i=0$. 
Let $\bc$ be the $i\text{th}$ column of $B$.
Choose $\varepsilon>0$ such that \[\varepsilon\max_{j}\{|\bc_j|\}< \min_{j}\{|(B\bx)_j|\}.\]
Recall that $\min_{j}\{|(B\bx)_j|\}$ is positive since $B\bx$ is nowhere-zero.
Notice that $\bx+ \varepsilon\be_i$ has exactly one less $0$ entry than $\bx$ and $B(\bx+\varepsilon\be_i) = B\bx+\varepsilon\bc$ is nowhere-zero by construction. 
This contradiction leads to the conclusion that $\bx$ is a nowhere-zero vector such that $B\bx$ is also nowhere-zero.
\end{proof}

By using this lemma, we give some sufficient conditions for a complementary vanishing graph to be $\alpha$- or $\beta$-robust.

\begin{lemma}\label{lem:dom}
Let $G$ be a complementary vanishing graph.
\begin{itemize}
\item[(1)] If $G$ has no dominating vertices, then $G$ is $\alpha$-robust.
\item[(2)] If $G$ has no isolated vertices, then $G$ is $\beta$-robust.
\end{itemize}
\end{lemma}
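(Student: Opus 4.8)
The plan is to exploit the fact that $AB = O$ forces the column space of $B$ to lie inside $\mker(A)$, so that producing a nowhere-zero vector in $\mker(A)$ reduces to producing a nowhere-zero vector in the column space of $B$. This is exactly the situation Lemma~\ref{colspace} is built to handle, provided $B$ has no row of zeros. So the whole argument for part (1) hinges on translating the hypothesis ``$G$ has no dominating vertices'' into ``$B$ has no row of zeros.''

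For part (1), I would start with matrices $A \in \S(G)$ and $B \in \S(\overline{G})$ witnessing that $G$ is complementary vanishing, so $AB = O$. The key observation is that the $i\text{th}$ row of $B$ is identically zero if and only if $b_{i,i} = 0$ and $N_{\overline{G}}(i) = \emptyset$, since the off-diagonal entries $b_{i,j}$ with $j \in N_{\overline{G}}(i)$ are forced to be nonzero by the definition of $\S(\overline{G})$. By Observation~\ref{obs:disjointNeighbors}, the condition $N_{\overline{G}}(i) = \emptyset$ says precisely that $i$ is a dominating vertex of $G$. Since $G$ has no dominating vertices, $B$ has no row of zeros, so Lemma~\ref{colspace} supplies a nowhere-zero vector $\bx$ with $B\bx$ nowhere-zero. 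As $B\bx$ lies in the column space of $B$ and $A(B\bx) = (AB)\bx = \bzero$, the vector $B\bx$ is a nowhere-zero element of $\mker(A)$, which certifies that $G$ is $\alpha$-robust.

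For part (2), rather than repeat the argument I would deduce it from part (1) by a complementation duality. Because $A$ and $B$ are symmetric and $AB = O$, we also have $BA = (AB)\trans = O$. Swapping the roles of the two matrices then shows that $G$ is $\beta$-robust if and only if $\overline{G}$ is $\alpha$-robust: a pair $(A,B)$ with $A\in\S(G)$, $B\in\S(\overline{G})$, $AB = O$, and a nowhere-zero vector in $\mker(B)$ corresponds to the pair $(B, A)$ with $B \in \S(\overline{G})$, $A \in \S(\overline{\overline{G}}) = \S(G)$, $BA = O$, and the same nowhere-zero vector in $\mker(B)$, and vice versa. Since a vertex is isolated in $G$ exactly when it is dominating in $\overline{G}$, the hypothesis that $G$ has no isolated vertices is equivalent to $\overline{G}$ having no dominating vertices, and part (1) applied to $\overline{G}$ gives the claim.

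The argument is mostly bookkeeping; the one step that needs care is the characterization of when a row of $B$ vanishes, since this is precisely where the ``no dominating vertices'' hypothesis enters, and it must be phrased so that only the off-diagonal structure of $\S(\overline{G})$ is used (the diagonal entry $b_{i,i}$ is unconstrained and so cannot be relied upon). Once that is in place, the rest follows cleanly from Lemma~\ref{colspace} together with the containment of the column space of $B$ in $\mker(A)$.
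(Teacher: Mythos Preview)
Your proof is correct and follows essentially the same approach as the paper: both translate ``no dominating vertices'' into ``$B$ has no zero row,'' invoke Lemma~\ref{colspace} to find a nowhere-zero vector in the column space of $B$, and observe that this vector lies in $\mker(A)$. For part~(2) the paper runs the identical argument directly on $A$ (no isolated vertices in $G$ means $A$ has no zero row, so its column space contains a nowhere-zero vector, which lies in $\mker(B)$ since $BA=(AB)\trans=O$), whereas you phrase this as the complementation duality ``$G$ is $\beta$-robust iff $\overline{G}$ is $\alpha$-robust''; these are the same argument in different clothing.
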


\begin{proof}
Since $G$ is complementary vanishing, there are matrices $A\in\S(G)$ and $B\in\S(\overline{G})$ such that $AB = O$.
First assume that $G$ has no dominating vertices.  This means $\overline{G}$ has no isolated vertices,  
and thus the column space of $B$ contains a nowhere-zero vector $\bv$ by Lemma \ref{colspace}. Having $AB = O$ implies $A\bv = \bzero$, so $\bv$ is a nowhere-zero vector in $\mker(A)$. Therefore, $G$ is $\alpha$-robust. 
An identical argument shows that if $G$ has no isolated vertices, then there is some nowhere-zero $\bx$ in the row space of $A$. 
This implies that $G$ is $\beta$-robust. 
\end{proof}

\begin{lemma}\label{robustlyrobust}
Let $G$ be a graph. If there exist matrices $A\in \S(G), B\in \S(\overline{G})$, and a nowhere-zero vector $\bx$ such that $AB=O$ and  $B\bx$ is nowhere-zero, then $G\sqcup K_1$ is $\beta$-robust.

\end{lemma}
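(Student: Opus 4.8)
The plan is to augment the given matrices $A$ and $B$ by a single new vertex corresponding to the $K_1$. Since $G\sqcup K_1$ is $G$ with an isolated vertex added, and $\overline{G\sqcup K_1}=\overline{G}\vee K_1$ makes that new vertex \emph{dominating} in the complement, every candidate pair $A'\in\S(G\sqcup K_1)$ and $B'\in\S(\overline{G\sqcup K_1})$ must have the bordered forms
\[
A'=\begin{bmatrix} A & \bzero \\ \bzero\trans & a\end{bmatrix},
\qquad
B'=\begin{bmatrix} B & \bu \\ \bu\trans & b\end{bmatrix},
\]
where $a,b\in\mathbb{R}$ are free diagonal entries and $\bu$ is forced to be a \emph{nowhere-zero} vector (the new vertex is adjacent to all of $\overline{G}$). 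First I would choose these free data so that simultaneously $A'B'=O$ and $\mker(B')$ contains a nowhere-zero vector.

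The key observation is that $B\bx$ is \emph{already} a nowhere-zero vector lying in $\mker(A)$: it is nowhere-zero by hypothesis, and $A(B\bx)=(AB)\bx=\bzero$ since $AB=O$. This makes $\bu=B\bx$ the natural choice, together with $a=0$. A block computation then gives
\[
A'B'=\begin{bmatrix} AB & A\bu \\ a\bu\trans & ab\end{bmatrix}
=\begin{bmatrix} O & A(B\bx) \\ \bzero\trans & 0\end{bmatrix}=O,
\]
using $AB=O$, $A(B\bx)=\bzero$, and $a=0$; note that taking $a=0$ is precisely what annihilates the lower-left block $a\bu\trans$, which could not otherwise vanish since $\bu$ is nowhere-zero.

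It remains to tune $b$ so that $B'$ has a nowhere-zero kernel vector. The natural candidate is $\begin{bmatrix}-\bx\\ 1\end{bmatrix}$, which is nowhere-zero because $\bx$ is. Its top block under $B'$ is $-B\bx+1\cdot\bu=-B\bx+B\bx=\bzero$, vanishing automatically by the choice $\bu=B\bx$, while its bottom entry is $-\bu\trans\bx+b=-\bx\trans B\bx+b$ (using symmetry of $B$). Setting $b=\bx\trans B\bx$ forces this to be $0$ as well, so $\begin{bmatrix}-\bx\\ 1\end{bmatrix}\in\mker(B')$ is nowhere-zero. Together with $A'B'=O$ and the observation that $A',B'$ plainly have the correct zero/nonzero patterns to lie in $\S(G\sqcup K_1)$ and $\S(\overline{G\sqcup K_1})$, this shows $G\sqcup K_1$ is $\beta$-robust.

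I expect the only genuine idea, as opposed to routine bordered-matrix bookkeeping, to be recognizing that the hypothesis ``$B\bx$ is nowhere-zero'' is tailor-made to meet two constraints at once: it supplies the nowhere-zero off-diagonal block $\bu$ demanded by the dominating new vertex of $\overline{G\sqcup K_1}$, while simultaneously lying in $\mker(A)$ so that $A'B'=O$ is preserved. Once that is seen, the freedom in the diagonal entries $a$ and $b$ is exactly enough to zero out the remaining blocks and to place the explicit nowhere-zero vector $\begin{bmatrix}-\bx\\1\end{bmatrix}$ in $\mker(B')$, and the membership checks $A'\in\S(G\sqcup K_1)$, $B'\in\S(\overline{G\sqcup K_1})$ are immediate.
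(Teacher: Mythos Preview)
Your proof is correct and essentially identical to the paper's: both set the new off-diagonal block to $B\bx$, the new diagonal entry of $A'$ to $0$, the new diagonal entry of $B'$ to $\bx\trans B\bx$, and exhibit $\begin{bmatrix}-\bx\\1\end{bmatrix}$ as the nowhere-zero kernel vector of $B'$. The only cosmetic difference is that the paper writes the bottom-right entry of $B'$ as $\bx\trans\bv$ (with $\bv=B\bx$), so the bottom entry of $B'\bx'$ vanishes as $-\bv\trans\bx+\bx\trans\bv=0$ without explicitly invoking symmetry, whereas you set $b=\bx\trans B\bx$ and appeal to $B=B\trans$.
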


\begin{proof}

Assume there exist matrices and vectors as in the hypothesis of the lemma.  
Let $\bv=B\bx$, which is nowhere-zero by assumption.
Define
\[A'=\begin{bmatrix}A & O\\ O & 0\end{bmatrix},\, B'=\begin{bmatrix}B & \bv\\ \bv^\top & \bx\trans \bv\end{bmatrix}.\]
Observe that $A'\in \S(G\sqcup K_1),B'\in \S(\overline{G\sqcup K_1})$. 
Furthermore, $A'B'=O$ since $AB=O$ and $A\bv=AB \bx=\bzero$.  
If $\bx'=\begin{bmatrix}-\bx\\ 1\end{bmatrix}$, then $\bx'$ is nowhere-zero and
\[B'\bx'=\begin{bmatrix}-B\bx + \bv\\ -{\bv}^\top\bx+\bx\trans \bv\end{bmatrix}=\bzero.\]
Thus, $G\sqcup K_1$ is $\beta$-robust.
\end{proof}

\begin{corollary} \label{cor:robustlyrobust}
If $G$ is complementary vanishing and does not have a dominating vertex, then $G\sqcup K_1$ is $\beta$-robust.
\end{corollary}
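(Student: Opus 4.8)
The plan is to obtain this as an immediate consequence of Lemma~\ref{robustlyrobust}. That lemma asks for matrices $A\in\S(G)$ and $B\in\S(\overline{G})$ with $AB=O$, together with a nowhere-zero vector $\bx$ for which $B\bx$ is also nowhere-zero. Since $G$ is complementary vanishing, matrices $A$ and $B$ with $AB=O$ exist by definition, so the only genuine task is to produce the vector $\bx$.

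To obtain $\bx$ I would invoke Lemma~\ref{colspace}, which guarantees a nowhere-zero vector $\bx$ with $B\bx$ nowhere-zero whenever $B$ has no row consisting entirely of zeros. The crux of the argument is therefore the observation that the absence of a dominating vertex in $G$ forces $B$ to have no zero row. Indeed, a dominating vertex of $G$ is exactly an isolated vertex of $\overline{G}$ (by Observation~\ref{obs:disjointNeighbors}, or directly), and row $i$ of $B$ is identically zero precisely when $b_{i,i}=0$ and vertex $i$ has no neighbor in $\overline{G}$, i.e.\ when $i$ is isolated in $\overline{G}$. Thus ``no dominating vertex in $G$'' rules out zero rows of $B$, and Lemma~\ref{colspace} applies to deliver the required $\bx$.

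With $A$, $B$, and $\bx$ in hand, the hypotheses of Lemma~\ref{robustlyrobust} hold verbatim, and that lemma yields that $G\sqcup K_1$ is $\beta$-robust, completing the proof. I do not expect any substantial obstacle here: the statement is a short corollary chaining Lemma~\ref{colspace} and Lemma~\ref{robustlyrobust}, and it parallels the reasoning already used in Lemma~\ref{lem:dom}(1). The single point deserving care is the chain of equivalences ``no dominating vertex in $G$'' $\Longleftrightarrow$ ``no isolated vertex in $\overline{G}$'' $\Longleftrightarrow$ ``no zero row in $B$,'' which is exactly what licenses the application of Lemma~\ref{colspace}.
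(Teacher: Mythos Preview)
Your proposal is correct and follows exactly the same route as the paper: use complementary vanishing to get $A,B$ with $AB=O$, observe that no dominating vertex in $G$ means $\overline{G}$ has no isolated vertex so $B$ has no zero row, apply Lemma~\ref{colspace} to get the nowhere-zero $\bx$ with $B\bx$ nowhere-zero, and finish with Lemma~\ref{robustlyrobust}. One tiny quibble: your final chain writes ``no isolated vertex in $\overline{G}$'' $\Longleftrightarrow$ ``no zero row in $B$'' as an equivalence, but only the forward implication holds in general (an isolated vertex can have a nonzero diagonal entry); since only that direction is needed, and you stated it correctly a few lines earlier, this does not affect the argument.
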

\begin{proof}
By assumption there exist $A\in \S(G),B\in \S(\overline{G})$ with $AB=O$. 
Since $G$ has no dominating vertices, $\overline{G}$ has no isolated vertices and $B$ does not have a row of zeros. 
By Lemma \ref{colspace}, there exists nowhere-zero vector $\bx$ such that $B\bx$ is also nowhere-zero. 
Therefore, $G\sqcup K_1$ is $\beta$-robust by Lemma \ref{robustlyrobust}.
\end{proof}

\subsection{Completing the Proof of Theorem~\ref{thm:biconnected}}\label{subsection:proofof1}

We first recall the statement of Theorem~\ref{thm:biconnected}.  Let $\c{M}$  denote the set of graphs such that $G$ is complementary vanishing, $G$ is connected, and $\overline{G}$ is connected.  
One can check that $K_1\in \c{M}$.  
Let $\c{R}$ denote the smallest set of graphs which contains $\c{M}$ and which is closed under taking disjoint unions, joins, and complements. 
For example, having $K_1\in \c{M}$ implies that $\c{R}$ contains every complete multipartite graph and every threshold graph.
Let $\mathcal C$ denote all the graphs $G$ such that either in $G$ or $\overline{G}$ there exist distinct vertices $u,v,w$ with $v,w\notin N(u)$, $|N(u)\setminus N(w)|=1$, and $|N(u)\setminus N(v)|=0$. 
Notice that graphs in $\mathcal C$ are not complementary vanishing by Corollary  \ref{cor:noInducedPath}.   Our aim is to prove the following.


\begin{theorem1.5}
    A graph $G$ is complementary vanishing if and only if $G\in \c{R}\setminus \mathcal C$.
\end{theorem1.5}

Part of Theorem \ref{thm:biconnected} can be proven immediately.

\begin{proposition}\label{prop:easyway}
If $G\notin \c R$, then $G$ is not complementary vanishing.
\end{proposition}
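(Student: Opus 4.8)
The plan is to prove the contrapositive: every complementary vanishing graph lies in $\c R$. I would proceed by induction on the number of vertices $n$. The base case $n = 1$ is immediate, since $K_1 \in \c M \subseteq \c R$. For the inductive step, I would split into cases according to the connectivity of $G$ and $\overline G$, using the classical fact that at most one of $G, \overline G$ can be disconnected, so that the three cases below are exhaustive.

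In the first case, both $G$ and $\overline G$ are connected; then $G \in \c M \subseteq \c R$ directly from the definition of $\c M$. In the second case, $G$ is disconnected, so I can write $G = G_1 \sqcup G_2$ with $G_1, G_2$ nonempty, and hence each on fewer than $n$ vertices. Since $G$ is complementary vanishing, Proposition~\ref{prop:robust} (the implication $(1c)\Rightarrow(1a)$) tells me that both $G_1$ and $G_2$ are $\alpha$-robust, and in particular complementary vanishing. The induction hypothesis then gives $G_1, G_2 \in \c R$, and closure of $\c R$ under disjoint unions yields $G \in \c R$.

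In the third case, $G$ is connected but $\overline G$ is disconnected, so $\overline G = H_1 \sqcup H_2$ and hence $G = \overline{H_1} \vee \overline{H_2}$ is a join $G_1 \vee G_2$ of two graphs on fewer than $n$ vertices. Here Proposition~\ref{prop:robust} (the implication $(2c)\Rightarrow(2a)$) shows that $G_1$ and $G_2$ are $\beta$-robust, hence complementary vanishing, so the induction hypothesis places them in $\c R$, and closure of $\c R$ under joins gives $G \in \c R$. This completes the induction.

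The argument is essentially a clean case analysis, and I do not expect a genuine obstacle; the only points requiring care are ensuring each decomposition produces strictly smaller graphs so that the induction hypothesis applies, and invoking the correct direction of Proposition~\ref{prop:robust} in each case. The real content has already been packaged into Proposition~\ref{prop:robust}, which converts the statement ``the disjoint union (resp.\ join) is complementary vanishing'' into ``each piece is $\alpha$-robust (resp.\ $\beta$-robust)''; once that is available, the structural recursion matching the very operations defining $\c R$ does all the work.
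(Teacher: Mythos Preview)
Your proof is correct and follows essentially the same approach as the paper: the paper phrases it as a minimal counterexample argument rather than an explicit induction, but the logical content is identical, using Proposition~\ref{prop:robust} to push the complementary vanishing property down to the pieces of a disjoint union or join and then invoking closure of $\c R$.
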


\begin{proof}
We will prove the proposition by minimal counterexample. 
Suppose that $G$ is complementary vanishing and a vertex minimal graph not in $\c R.$
This implies that either $G$ or $\overline G$ is not connected as otherwise $G\in \c M\subseteq \c R.$ 
This implies that either $G = H\sqcup K$ or $G = H\vee K$ where $H,K$ are non-empty complementary vanishing graphs by Proposition~\ref{prop:robust}. 
Since $G$ is assumed to be a vertex minimal counterexample, it follows that $H,K\in \c R$.   However, this implies that $G\in \c R$; which is a contradiction. 
\end{proof}

In order to prove Theorem \ref{thm:biconnected}, we need to keep track of the graphs in $\c R$ that are close to falling within $\mathcal C$. 
Let $\c{D}\subseteq \c{R}$ denote the set of graphs $G$ that contain a leaf which is adjacent to a vertex of degree at least $2$.
For example, stars on at least three vertices are in $\c D$, as is a triangle with a pendant (notice that both of these graphs are in $\c R$). The main observation regarding $\c{D}$ is the following.


\begin{lemma}\label{lem:danger}
If $D\in \c{D}$ and $H$ is a graph on at least one vertex, then $D\sqcup H\in \mathcal C$. Furthermore, $D\sqcup H$ is not complementary vanishing.
\end{lemma}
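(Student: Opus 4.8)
The plan is to exhibit the forbidden configuration defining $\mathcal C$ directly inside $G := D\sqcup H$, so that the ``furthermore'' part follows at once from Corollary~\ref{cor:noInducedPath}. Since $D\in\c D$, by definition $D$ has a leaf; call it $u$, and let $x$ be its unique neighbor, which by hypothesis satisfies $\deg_D(x)\ge 2$. First I would use this degree condition to extract a second neighbor of $x$: there is some $w_0\in N_D(x)$ with $w_0\ne u$, and necessarily $w_0\ne x$ (no self-loops). Then I would pick any vertex $y\in V(H)$, which exists since $H$ has at least one vertex. The claim will be that the triple $(u,w_0,y)$ witnesses membership in $\mathcal C$ for the graph $G$ itself, so that we never need to pass to $\overline G$.

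Next I would verify the three required properties in turn, using that the disjoint union adds no edges at $u$, so $N_G(u)=\{x\}$. Both $w_0$ and $y$ lie outside $N_G(u)=\{x\}$, since $w_0\ne x$ and $y$ is in a different component from $x$. We have $N_G(u)=\{x\}\subseteq N_G(w_0)$ because $w_0\sim_G x$, giving $|N_G(u)\sm N_G(w_0)|=0$. And $x\notin N_G(y)$, since $y\in V(H)$ and $x\in V(D)$ lie in different components of $D\sqcup H$, so $N_G(u)\sm N_G(y)=\{x\}$ has size $1$. Finally $u,w_0,y$ are pairwise distinct: $w_0\ne u$ by choice, and $y$ lies in a component disjoint from the one containing $u$ and $w_0$. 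This shows $D\sqcup H\in\mathcal C$.

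The ``furthermore'' statement then follows immediately, because every graph in $\mathcal C$ is not complementary vanishing by Corollary~\ref{cor:noInducedPath}; here the witnessing configuration is already present in $G$, so no appeal to the complement is required. I expect no genuinely hard step: the only thing to be careful about is the bookkeeping of which component each chosen vertex inhabits, so that the non-adjacency of $y$ to $x$ and the distinctness of the three vertices are secured by the disjointness of the union rather than silently assumed. The role of the hypothesis $\deg_D(x)\ge 2$ is precisely to supply the vertex $w_0$ that forces $|N_G(u)\sm N_G(w_0)|=0$; were $x$ a leaf as well, this equality would fail and the argument would break down.
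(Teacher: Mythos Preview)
Your proof is correct and follows essentially the same approach as the paper: pick the leaf $u$, its neighbor $x$, a second neighbor of $x$, and any vertex of $H$, then verify the $\mathcal C$ conditions directly in $G$ and invoke Corollary~\ref{cor:noInducedPath}. The only cosmetic difference is naming: your $w_0$ plays the role of the definition's $v$ (the vertex with $|N(u)\sm N(v)|=0$) and your $y$ plays the role of $w$ (the vertex with $|N(u)\sm N(w)|=1$), which is the reverse of the paper's labeling, but the argument is identical.
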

\begin{proof}
By definition of $\c D$, the graph $D$ contains a leaf $u$  adjacent to $x$ such that there exists $v\in N(x)\setminus \{u\}.$  
Notice that $|N(u)\setminus N(v)| = 0$. 
Since $H$ is not the empty graph, there exists a vertex $w$ in $H$ such that $|N(u)\setminus N(w)| = 1$. 
By Corollary \ref{cor:noInducedPath}, $D\sqcup H\in\mathcal{C}$ is not complementary vanishing.
\end{proof}

To prove Theorem~\ref{thm:biconnected}, we prove the following stronger version to aid with an inductive proof.

\begin{theorem}\label{thm:nonminTech}
Let $G\in \c{R}$.
\begin{itemize}
\item[(1)] If $G\in \mathcal C$, then $G$ is not complementary  vanishing.
\item[(2)] If $G\notin \mathcal C$ and $G\in \c{D}$, then $G$ is $\beta$-robust but not $\alpha$-robust.
\item[(3)] If $G\notin \mathcal C$ and  $\overline{G}\in \c{D}$, then $G$ is $\alpha$-robust but not $\beta$-robust.
\item[(4)] If $G\notin \mathcal C$ and  $G,\overline{G}\notin \c{D}$, then $G$ is both $\alpha$ and $\beta$-robust.
\end{itemize}
\end{theorem}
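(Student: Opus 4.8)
The plan is to induct on $n=|V(G)|$, proving (1)--(4) simultaneously for every $G\in\c R$ on $n$ vertices. Statement~(1) needs no induction at all: if $G\in\mathcal C$ then $G$ is not complementary vanishing by Corollary~\ref{cor:noInducedPath}. For (2)--(4) I would first record the behaviour under complementation: $G$ is $\alpha$-robust if and only if $\overline G$ is $\beta$-robust, while $\mathcal C$ is closed under complementation and $G\in\c D$ versus $\overline G\in\c D$ are interchanged. Thus complementation swaps cases (2) and (3) and fixes (1) and (4), so it suffices to treat the base case $G\in\c M$ and the inductive step $G=H\sqcup K$; the join case $G=H\vee K$ follows by applying the disjoint-union analysis to $\overline G=\overline H\sqcup\overline K$ (whose smaller parts $\overline H,\overline K\in\c R$) and dualizing. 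For the base case $G\in\c M$, I would show $G,\overline G\notin\mathcal C\cup\c D$: a graph in $\c M$ is complementary vanishing, hence not in $\mathcal C$, and if $G$ had a leaf $u$ adjacent to a vertex $x$ of degree $\ge 2$ then (since $\overline G$ is connected, $x$ is not dominating) one finds a vertex $w\not\sim x$ exhibiting $G\in\mathcal C$, a contradiction. Hence $G$ falls in case~(4), and since $G$ connected gives no isolated vertex and $\overline G$ connected gives no dominating vertex, Lemma~\ref{lem:dom} makes $G$ both $\alpha$- and $\beta$-robust (with $K_1$ checked directly).

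For the inductive step, suppose $G=H\sqcup K\in\c R\setminus\c M$ with $H,K\in\c R$, and assume $G\notin\mathcal C$ (otherwise we are in case~(1)). Lemma~\ref{lem:danger} immediately forces $H,K\notin\c D$, and a short neighborhood computation forces $H,K\notin\mathcal C$: the $\mathcal C$-structure in $H$ transfers verbatim to $H\sqcup K$, and a $\mathcal C$-structure in $\overline H$ transfers to $\overline G=\overline H\vee\overline K$ because joining with $\overline K$ adds the same vertices to every relevant neighborhood. By the inductive hypothesis each of $H,K$ therefore lies in case~(3) or~(4) and is in particular $\alpha$-robust, so $G$ is $\alpha$-robust by Proposition~\ref{prop:robust}. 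Since a leaf and its neighbor lie in a single component, $G\notin\c D$ as well. Hence $G$ belongs to case~(3) or~(4) according to whether $\overline G\in\c D$, and only $\beta$-robustness remains to be determined.

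Case (4), $\overline G\notin\c D$: if $G$ has no isolated vertex then $\overline G$ has no dominating vertex, and as $\overline G$ is complementary vanishing, Lemma~\ref{lem:dom} makes $\overline G$ $\alpha$-robust, i.e.\ $G$ is $\beta$-robust. If $G$ has an isolated vertex $p$, write $G=G'\sqcup K_1$; the class $\c R$ is closed under deleting an isolated vertex and deleting $p$ cannot create a $\mathcal C$-structure, so $G'\in\c R\setminus\mathcal C$ is complementary vanishing by the inductive hypothesis, and $\overline G\notin\c D$ forces $G'$ to have no dominating vertex (a dominating vertex of $G'$ would become a leaf of $\overline G$ attached to the dominating vertex $p$). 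Corollary~\ref{cor:robustlyrobust} then gives that $G=G'\sqcup K_1$ is $\beta$-robust, with the degenerate case $G=2K_1$ verified by hand. Case (3), $\overline G\in\c D$: up to swapping $H,K$ this forces $K=K_1$, a dominating vertex $d$ of $H$, and $|H|\ge 2$, so in $\overline G=\overline H\vee K_1$ the vertex $d$ is a leaf whose only neighbor is the dominating vertex $z$ of $\overline G$. I would show $\overline G$ is not $\alpha$-robust, hence $G$ not $\beta$-robust: given $A'\in\S(\overline G)$, $B'\in\S(G)$ with $A'B'=O$ and any $\bv\in\mker(A')$, applying Lemma~\ref{prop.subneighborhood} to $d$ and any other vertex of $\overline H$ (here $|H|\ge 2$ is used) gives $a'_{d,d}=0$, so the $d$th coordinate of $A'\bv=\bzero$ reads $a'_{d,z}\bv_z=0$ with $a'_{d,z}\neq 0$, forcing $\bv_z=0$; thus $\mker(A')$ has no nowhere-zero vector.

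The main obstacle is the negative half of case~(3): showing that the relevant graphs fail to be $\beta$-robust. Because these graphs \emph{are} complementary vanishing, no full contradiction in the diagonal is available, so the elementary diagonal lemmas cannot by themselves rule out a nowhere-zero kernel vector; the crux is to isolate a single forced zero coordinate, which is exactly what the leaf/dominating-vertex pair $(d,z)$ in $\overline G$ provides via Lemma~\ref{prop.subneighborhood}. A secondary, more tedious point is the bookkeeping verifying that the four cases are exhaustive and mutually exclusive on $\c R\setminus\mathcal C$ (in particular that $G$ and $\overline G$ are never both in $\c D$) and that they transfer correctly under the complementation reduction.
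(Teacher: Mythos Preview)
Your proof is correct and rests on the same toolkit as the paper (Proposition~\ref{prop:robust}, Lemma~\ref{lem:dom}, Corollary~\ref{cor:robustlyrobust}, Lemma~\ref{lem:danger}), but the organization and one key step differ. The paper proves (2) as a standalone, non-inductive statement: it gets ``not $\alpha$-robust'' from $G\sqcup K_1\in\mathcal C$ (Lemma~\ref{lem:danger}) together with Proposition~\ref{prop:robust}, and gets ``$\beta$-robust'' by arguing structurally that any $G\in\c D\setminus\mathcal C$ must lie in $\c M$ (it is connected, and a join cannot have the required leaf), whence Lemma~\ref{lem:dom} applies. Statement~(3) is then pure complementation, and (4) is a minimal-counterexample argument essentially identical to your case~(4). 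You instead fold everything into one induction: you show that $G\in\c M$ always lands in case~(4) and that $G=H\sqcup K$ never lands in case~(2), so case~(2) only appears via complementation from your case~(3); and for the negative half of~(3) you give a direct linear-algebraic argument (the leaf $d$ in $\overline G$ forces $a'_{d,d}=0$ by Lemma~\ref{prop.subneighborhood}, whence every kernel vector vanishes at the dominating coordinate $z$). Your route avoids invoking Lemma~\ref{lem:danger} and Proposition~\ref{prop:robust} for the ``not robust'' conclusion, at the cost of a small extra kernel computation; the paper's route is shorter there but needs the separate structural observation that $\c D\setminus\mathcal C\subseteq\c M$. One minor point you should make explicit is that $\c R$ is closed under deleting an isolated vertex (needed when you write $G=G'\sqcup K_1$): this follows by a short inner induction on the decomposition of $G$, but it is not entirely immediate from the definition of $\c R$.
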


\begin{proof}
\textbf{Statement (1):} This follows immediately from Corollary~\ref{cor:noInducedPath}.

\textbf{Statement (2):} Suppose that $G\in (\c D\setminus \mathcal C)\cap \c R$. 
Since $G \in \c D$, it follows that $G\sqcup K_1$ is not complementary vanishing by Lemma \ref{lem:danger}. 
By Proposition \ref{prop:robust} and the fact that $K_1$ is $\alpha$-robust, it follows that $G$ is not $\alpha$-robust.

For the sake of contradiction, suppose that $G$  is disconnected. 
This implies that $G=H\sqcup K$, and without loss of generality we can assume $H$ contains a leaf with a neighbor whose degree is at least $2$ since $G\in \mathcal D.$ 
In particular, $H\in \c D$, and therefore, $G\in \mathcal C$ by Lemma~\ref{lem:danger}. 
This is a contradiction.
Thus, we can assume that $G$ is connected.

For the sake of contradiction, suppose that $G$ is a 
counterexample to the statement. 
If $G\in \c M$, then $G$ is complementary vanishing by definition, and it is $\beta$-robust by Lemma~\ref{lem:dom} since $G$ is connected (and since $G\ne K_1$ because $K_1\notin \c{D}$). 
Therefore, $G\notin \c M$.  Thus, there exists non-empty  $H,K\in \c R$ such that $G= H\vee K$ or $G= H\sqcup K$, and we must have $G=H\vee K$ since $G$ is connected.
However, $G\in \c{D}$ must have a leaf, which is only possible if $H=K=K_1$. 
In this case, $G$ does not have a vertex of degree at least $2$, which is a contradiction to $G\in \c{D}$. 

\textbf{Statement (3):} Suppose $\overline G\in \c D$ and $G\notin \mathcal C$.
Since $\mathcal C$ is closed under complements, it follows that $\overline G\notin \mathcal C$. 
By statement (2), we see that $\overline G$ is $\beta$-robust but not $\alpha$-robust. 
Thus, $G$ is $\alpha$-robust but not $\beta$-robust. 

\textbf{Statement (4):} Suppose $G\notin \mathcal C$ and $G,\overline G\notin \c D$ is a vertex minimal counterexample.  It is easy to see that $G=K_1$ is both $\alpha$ and $\beta$-robust, so this is not a counterexample.
Observe that graphs in $\c M\sm \{K_1\}$ are  complementary vanishing with no isolated vertices or dominating vertices.
Therefore, graphs in $\c M$ are both $\alpha$ and $\beta$-robust by Lemma  \ref{lem:dom}.
Thus we can assume that $G\notin \c M$.
In particular, $G=H\sqcup K$ or $G= H\vee K$ for some non-empty $H,K\in \c R$.
Since $\mathcal C$ is closed under complements, we can assume that $G= H\sqcup K$ without loss of generality. 

Since $G\notin \mathcal C$, it follows from Lemma~\ref{lem:danger} that neither $H$ nor $K$ is in $\c D\cup \mathcal C$.  If $\overline{H}\in \c D$, then $H$ is $\alpha$-robust by statement (3), and if $\overline{H}\notin \c D$ then $H$ is $\alpha$-robust since $G$ is a vertex minimal counterexample to statement (4).  Similarly we conclude that $K$ is $\alpha$-robust, and hence by Proposition~\ref{prop:robust} we see that $G$ is complementary vanishing and $\alpha$-robust. 

If $G$ does not have an isolated vertex, then we are done by Lemma \ref{lem:dom}. Therefore, we can assume that $K= K_1$.
Notice that if $H=K_1$, then it is easy to show that $G$ is not a counterexample. 
Furthermore, if $H$  does not have a dominating vertex, then $G$ is $\beta$-robust by Corollary \ref{cor:robustlyrobust}. 
Thus, we can assume  $H$ contains at least two vertices and a dominating vertex. 
However, this implies  $\overline G$ has a leaf which is adjacent to a vertex of degree at least two. 
This is a contradiction, since we assume  $\overline G\notin \c D$. 
\end{proof}
We can now prove our main result.
\begin{proof}[Proof of Theorem~\ref{thm:biconnected}]
Suppose that $G\notin \c R\setminus \mathcal C$. 
If $G\in \mathcal C$, then $G$ is not complementary vanishing by Corollary \ref{cor:noInducedPath}.
If $G\notin \c R$, then $G$ is not complementary vanishing by Proposition~\ref{prop:easyway}. 

Suppose that $G\in \c R\setminus \mathcal C$. 
There are three cases which exhaust all possibilities: either $G\in \c D$, $\overline G\in \c D$, or $G, \overline G\notin \c D$. 
In any case, $G$ is complementary vanishing by Theorem~\ref{thm:nonminTech}.
\end{proof}

\section{Algorithmic approaches and the proof of Theorem~\ref{thm:conditions}}\label{Sec4}

In this section we introduce two algorithmic methods.  The first uses Gr\"obner basis arguments to conclude that a graph is not complementary vanishing.  
The second is an algorithm that takes a matrix $A\in\S(G)$ and checks  for the existence of a matrix $B\in\S(\overline{G})$ such that $AB = O$.  Using this second algorithm together with a randomly selected $A\in\S(G)$ (or $B\in\S(\overline{G})$) will allow us to find certificates for a graph being complementary vanishing.

\subsection{Gr\"obner basis}
\label{subsec:grob}

\input{pics}

\begin{figure}[h]
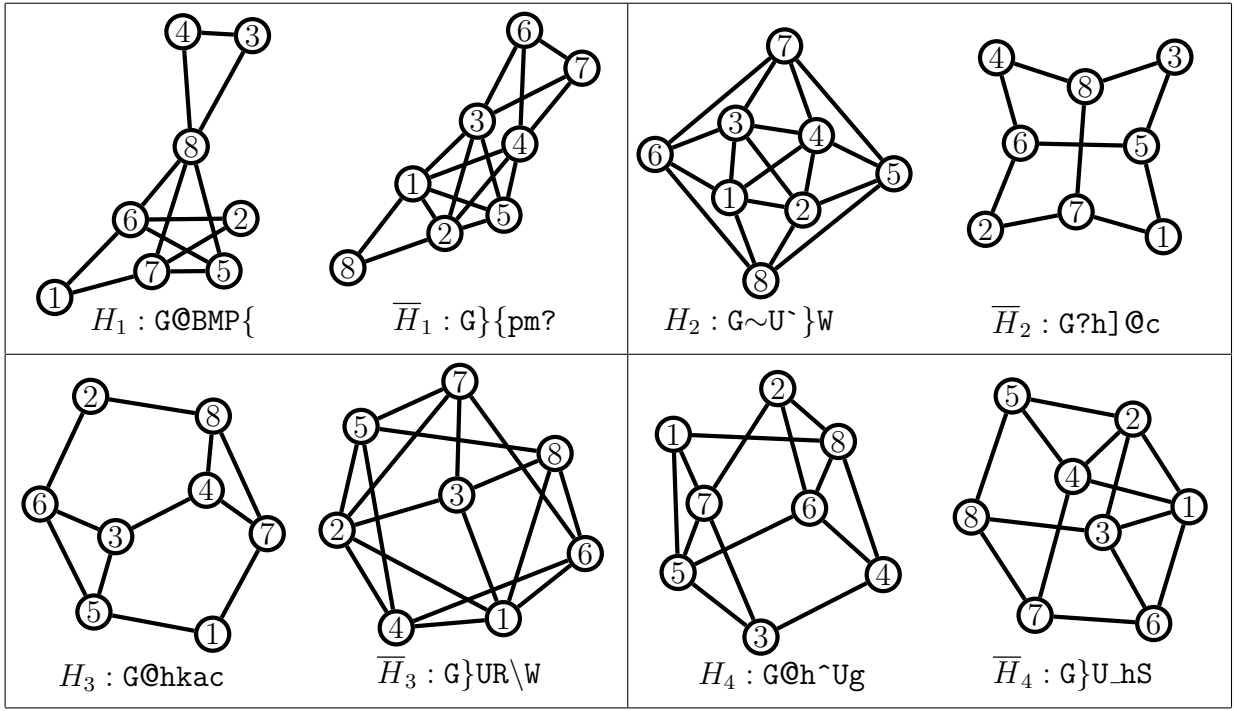

\begin{tabular}{|cc|cc|}\hline
\Hone & \Honebar 	&	\Htwo & \Htwobar 		\\\hline
\Hthree & \Hthreebar 	&	\Hfour & \Hfourbar 		\\\hline
\end{tabular} 

\caption{Graphs that are not complementary vanishing  due to the Gr\"obner basis argument. Each graph is labelled by its graph6 string, as used by the nauty package.}
\label{fig:grob}
\end{figure}

Let $G$ be a graph.  Define $A$ to be the variable matrix whose $i,j$-entry is a variable $a_{i,j} = a_{j,i}$ if $\{i,j\}$ is an edge or $i = j$, and otherwise we set the entry to be $0$.  Similarly, we define a variable matrix $B$ with variables $b_{i,j}$ for $\overline{G}$.  Thus, $AB = O$ is equivalent to a system of $n^2$ polynomial equations $f_{i,j} = 0$ for all $1\leq i \leq n$ and $1\leq j \leq n$, where 
\[f_{i,j} = \sum_{k\in N_G[i]\cap N_{\overline{G}}[j]}a_{i,k}b_{k,j}.\]
With this in mind, we let $I_{A,B} = \langle f_{i,j}\rangle$ be the ideal generated by all $f_{i,j}$ in the polynomial ring over $\mathbb{R}$ (for computational purposes, we sometimes consider the polynomial ring over $\mathbb{Q}$). 
Recall that the Gr\"obner basis of an ideal is a particularly nice generating set for an ideal, see \cite{CLO15} for background on this. 
The only fact about Gr\"obner bases that we need is that there exist computer algebra systems, such as SageMath, that can be used to  calculate the Gr\"obner basis $\mathbf{B}$ of $I_{A,B}$ so that $\langle\mathbf{B}\rangle = I_{A,B}$.

Suppose $\mathbf{B} = \{1\}$.  Then we know a combination of $f_{i,j}$ generates $1$, which is impossible when we assume $f_{i,j} = 0$ for all $i,j$.  Therefore, if $\mathbf{B} = \{1\}$, then $G$ is not complementary vanishing.  More formally, this argument implies the following.  

\begin{proposition}[Gr\"obner basis argument]
\label{prop:groebner}
Let $G$ be a graph on $n$ vertices.  Let $A\in\S(G)$ and $B\in\S(\overline{G})$ be the corresponding variable matrices.  If the Gr\"obner basis of $I_{A,B}$ contains $1$, then $G$ is not complementary vanishing.  
\end{proposition}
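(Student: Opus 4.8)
The plan is to prove the contrapositive via an explicit ideal-membership certificate: the hypothesis that $1$ lies in the Gr\"obner basis forces $1 \in I_{A,B}$, and I will derive a contradiction by evaluating the resulting certificate at a point coming from a hypothetical complementary vanishing pair.

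First I would record the elementary observation tying the polynomials $f_{i,j}$ to matrix multiplication: the $(i,j)$-entry of the formal product $AB$ of the two variable matrices is $\sum_{k} a_{i,k} b_{k,j}$, and by construction $a_{i,k} = 0$ whenever $k \notin N_G[i]$, while $b_{k,j} = 0$ whenever $k \notin N_{\overline{G}}[j]$. Hence only indices $k \in N_G[i]\cap N_{\overline{G}}[j]$ survive, so this entry is exactly $f_{i,j}$. In particular, a choice of real values for all the variables makes every $f_{i,j}$ vanish if and only if the corresponding matrices multiply to $O$.

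Next, suppose toward a contradiction that $G$ is complementary vanishing, witnessed by $A_0 \in \S(G)$ and $B_0 \in \S(\overline{G})$ with $A_0 B_0 = O$. Substituting the real entries of $A_0$ and $B_0$ for the variables $a_{i,k}$ and $b_{k,j}$ yields a point $p$ in real affine space at which each $f_{i,j}$ evaluates to the $(i,j)$-entry of $A_0 B_0$, namely $0$; thus $p$ is a common real zero of all the generators of $I_{A,B}$. On the other hand, if the Gr\"obner basis $\mathbf{B}$ of $I_{A,B}$ contains $1$, then $1 \in \langle \mathbf{B}\rangle = I_{A,B}$, so there are polynomials $g_{i,j}$ with $1 = \sum_{i,j} g_{i,j} f_{i,j}$. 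Evaluating this identity at $p$ sends the right-hand side to $0$ while the left-hand side is $1$, giving the contradiction $1 = 0$. Therefore no such $A_0, B_0$ exist and $G$ is not complementary vanishing.

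There is no serious obstacle here; this is a direct certificate argument and does not even require the Nullstellensatz. The two points deserving care are the identification of $f_{i,j}$ with an entry of $AB$, which rests entirely on the prescribed zero patterns of the two variable matrices, and the remark that although $\mathbf{B}$ may be computed over $\mathbb{Q}$ for efficiency, a rational combination expressing $1$ is a fortiori a combination over $\mathbb{R}$, so the certificate remains valid when evaluated at the real point $p$. Note finally that the implication runs only one way: common zeros of $I_{A,B}$ need not correspond to genuine pairs in $\S(G)\times\S(\overline{G})$, but this is irrelevant, since we use only that a complementary vanishing $G$ produces some real zero of $I_{A,B}$.
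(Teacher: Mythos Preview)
Your argument is correct and matches the paper's reasoning exactly: the paper's justification (given in the paragraph just before the proposition) is simply that $1\in\langle\mathbf{B}\rangle=I_{A,B}$ means some combination of the $f_{i,j}$ equals $1$, which is impossible if all $f_{i,j}$ vanish at a point coming from a complementary vanishing pair. Your version is more detailed and adds the useful remarks about working over $\mathbb{Q}$ versus $\mathbb{R}$ and about the implication being only one-directional, but the core idea is identical.
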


The calculation of a Gr\"obner basis can be expensive.  There are a few ways to reduce the number of variables:
\begin{enumerate}
\item Lemmas~\ref{one.diag.ent.zero}, \ref{prop.subneighborhood} and \ref{prop.one.private.neighbor} imply  that certain diagonal entries in $A$ and $B$ are either $0$ or nonzero.
\item By replacing $A$ by $k A$, we may assume one of the nonzero variables in $A$ is $1$.  (This variable can be an off-diagonal entry, or a diagonal entry that is guaranteed to be nonzero by one of our lemmas).  The same argument applies simultaneously for the matrix $B$.
\item Pick a spanning forest $T$ of $G$.  By replacing $A$ by $\pm DAD$ and $B$ by $D^{-1}BD^{-1}$ for an appropriate invertible diagonal matrix $D$, we may assume the entries in $A$ corresponding to the edges of $T$ along with some (arbitrary) nonzero diagonal entry are $1$.  We may switch the role of $A$ and $B$, but we cannot apply this to $A$ and $B$ simultaneously. 
\end{enumerate}

Note that this last technique relies on the choice of the spanning tree, and that the Gr\"obner basis depends on the choice of the monomial order, which often depends on the order of the variables.  In most of the cases we have tried, we used the degree reverse lexicographic order of the monomials (which is the default setting for SageMath) with the off-diagonal variables preferred over the diagonal variables.  We use this method to show the graphs in Figure~\ref{fig:grob}
are not complementary vanishing.

\begin{example}
Let $H_2$ and $\overline{H}_2$ be the graphs in Figure~\ref{fig:grob}.  Assume $A = \begin{bmatrix} a_{i,j} \end{bmatrix} \in\S(H_2)$ and $B = \begin{bmatrix} b_{i,j} \end{bmatrix}\in\S(\overline{H}_2)$ satisfy $AB = O$.  According to Lemmas~\ref{prop.subneighborhood} and \ref{prop.one.private.neighbor}, we may assume 
\[\begin{aligned}
a_{i,i} = 0 &\text{ for }i \in  \{1,2,3,4\}, \\
a_{i,i} \neq 0 &\text{ for }i \in \{5,6,7,8\}, \\
b_{i,i} \neq 0 &\text{ for }i \in  \{1,2,3,4\}, \\
b_{i,i} = 0 &\text{ for }i \in \{5,6,7,8\}. \\
\end{aligned}\]
Moreover, we may pick a spanning tree $T$ of $H_2$ using the edges 
\[E(T) = \{\{1,8\}, \{2,8\}, \{3,7\}, \{4,7\}, \{5,8\}, \{6,7\}, \{6,8\}\}.\]

By replacing $A$ with $\pm DAD$ and $B$ with $D^{-1}BD^{-1}$ for some appropriate diagonal matrix $D$, we may assume the entries in $A$ corresponding to $E(T)$ along with $a_{5,5}$ are $1$.  Moreover, by replacing $B$ with $kB$ for some nonzero $k$, we may further assume $b_{1,5} = 1$.  In conclusion, we have 
\[A = \begin{bmatrix}
0 & a_{1,2} & a_{1,3} & a_{1,4} & 0 & a_{1,6} & 0 & 1 \\
a_{1,2} & 0 & a_{2,3} & a_{2,4} & a_{2,5} & 0 & 0 & 1 \\
a_{1,3} & a_{2,3} & 0 & a_{3,4} & 0 & a_{3,6} & 1 & 0 \\
a_{1,4} & a_{2,4} & a_{3,4} & 0 & a_{4,5} & 0 & 1 & 0 \\
0 & a_{2,5} & 0 & a_{4,5} & 1 & 0 & a_{5,7} & 1 \\
a_{1,6} & 0 & a_{3,6} & 0 & 0 & a_{6,6} & 1 & 1 \\
0 & 0 & 1 & 1 & a_{5,7} & 1 & a_{7,7} & 0 \\
1 & 1 & 0 & 0 & 1 & 1 & 0 & a_{8,8}
\end{bmatrix}\]
and 
\[B = \begin{bmatrix}
b_{1,1} & 0 & 0 & 0 & 1 & 0 & b_{1,7} & 0 \\
0 & b_{2,3} & 0 & 0 & 0 & b_{2,6} & b_{2,7} & 0 \\
0 & 0 & b_{3,3} & 0 & b_{3,5} & 0 & 0 & b_{3,8} \\
0 & 0 & 0 & b_{4,4} & 0 & b_{4,6} & 0 & b_{4,8} \\
1 & 0 & b_{3,5} & 0 & 0 & b_{5,6} & 0 & 0 \\
0 & b_{2,6} & 0 & b_{4,6} & b_{5,6} & 0 & 0 & 0 \\
b_{1,7} & b_{2,7} & 0 & 0 & 0 & 0 & 0 & b_{7,8} \\
0 & 0 & b_{3,8} & b_{4,8} & 0 & 0 & b_{7,8} & 0
\end{bmatrix}.\]
By treating the remaining $a_{i,j}$ and $b_{i,j}$ as variables, the $8^2$ polynomial entries of $AB$ generate the ideal $I_{A,B}$.  

We then find the Gr\"obner basis of $I_{A,B}$ under the degree reverse lexicographic order, which requires an ordering of the variables.  
Order the variables by
\[a_{i,j}\ (i\neq j) \succ b_{i,j}\ (i\neq j) \succ a_{i,i} \succ b_{i,i}\]
(and order the off-diagonal entries by lexicographic order).  This means that the algorithm prioritizes elimination of $a_{i,j}$ first, and then $b_{i,j}$, and so on.  With these settings, the Gr\"obner basis of $I_{A,B}$ is $\{1\}$.
This means some polynomials in $I_{A,B}$ generates $1$, which is impossible if they are all zero.  Therefore, $H_2$ is not complementary vanishing.
\end{example}


\begin{remark}
The same technique applies to  $H_3$ and $H_4$ in Figure~\ref{fig:grob}, so they are not complementary vanishing.  For $H_1$, one may choose a spanning tree in $\overline{H}_1$ instead. In this case the Gr\"obner basis will contain a polynomial of a single nonzero variable, e.g., $a_{5,8}$.  Therefore, it again shows $H_1$ is not complementary vanishing.
\end{remark}

\subsection{Solving for \texorpdfstring{$B$}{B}}
\label{subsec:random}

Let $G$ be a graph and $A\in\S(G)$.  We present an algorithm for checking if there is a matrix $B\in\S(\overline{G})$ such that $AB = O$.  

Let $G$ be a graph with $n$ vertices and $m$ edges.  We define $\Scl(G)$ as the topological closure of $\S(G)$; that is, $\Scl(G)$ consists of all real symmetric matrices whose $i,j$-entry can be nonzero only when $\{i,j\}$ is an edge or $i = j$.  Thus, $\Scl(G)$ is a vector subspace of dimension $n+m$ in the space of real symmetric matrices of order $n$.

When $A\in\S(G)$ is given, define
\[\mathcal{W} = \{B \in \Scl(\overline{G}): AB = O\},\]
which is a vector space.  Our task is to determine whether the vector space $\mathcal{W}$ contains a matrix $B\in\S(\overline{G})$.  To do so, we need to check the existence of a matrix $B\in \mathcal{W}$ that is nonzero on the edges of $\overline{G}$.  

Let $\mathbf{B}$ be a basis of $\mathcal{W}$.  Then every matrix in $\mathcal{W}$ is zero at a given entry if and only if every matrix in $\mathbf{B}$ is zero at the given entry.  Moreover, when $S$ is a subset of indices, $\mathcal{W}$ has a matrix that is nowhere-zero on $S$ if and only if for each index $\{i,j\}\in S$, there is a vector in $\mathbf{B}$ that is not zero at index $\{i,j\}$.  With these observations, whether there is a matrix $B\in\S(\overline{G})$ with $AB = O$ can be determined through standard linear algebra techniques.  This idea is summarized in Proposition~\ref{prop:solveB}.

\begin{proposition}
\label{prop:solveB}
Let $G$ be a graph, $A \in \S(G)$, and let $\mathcal{W}$ be as defined above with $\mathbf{B}$ a basis of $\mathcal{W}$.  Then a matrix $B\in\S(\overline{G})$ exists with $AB = O$ if and only if for every edge $\{i,j\}\in E(\overline{G})$, there is a matrix $C\in\mathbf{B}$ whose $i,j$-entry is nonzero.  Moreover, if the latter condition holds, then a random linear combination of $\mathbf{B}$ with each coefficient in Gaussian distribution gives a matrix $B$ in $\S(\overline{G})$ with probability $1$.
\end{proposition}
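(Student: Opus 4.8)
The plan is to recast membership in $\S(\overline{G})$ as a finite collection of non-vanishing conditions, then establish the equivalence through the linear functionals that read off individual entries, with the Gaussian statement serving as a quantitative (measure-theoretic) version of the reverse direction. For each edge $e = \{i,j\} \in E(\overline{G})$, I would introduce the linear functional $\phi_e : \mathcal{W} \to \mathbb{R}$ defined by $\phi_e(B) = b_{i,j}$. Since every $B \in \mathcal{W}$ already lies in $\Scl(\overline{G})$, its off-diagonal support is confined to edges of $\overline{G}$, so such a $B$ belongs to $\S(\overline{G})$ exactly when $\phi_e(B) \neq 0$ for every edge $e$. By linearity, $\phi_e$ vanishes identically on $\mathcal{W}$ if and only if every matrix $C \in \mathbf{B}$ has $(i,j)$-entry equal to $0$; and $AB = O$ holds automatically for any $B \in \mathcal{W}$ by definition of $\mathcal{W}$.

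The forward direction of the equivalence is then immediate: if some $B \in \S(\overline{G})$ satisfies $AB = O$, then $B \in \mathcal{W}$ and $\phi_e(B) = b_{i,j} \neq 0$ for each edge $e$, so each $\phi_e$ is not identically zero on $\mathcal{W}$, which by the observation above forces some $C \in \mathbf{B}$ to have nonzero $(i,j)$-entry.

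For the reverse direction and the probabilistic claim at once, I would assume that for every edge $e$ some basis element is nonzero at the corresponding entry, i.e. each $\phi_e \not\equiv 0$, and form the random matrix $B = \sum_k \xi_k C_k$, where $\mathbf{B} = \{C_k\}$ and the $\xi_k$ are i.i.d.\ standard Gaussians. For a fixed edge $e = \{i,j\}$ we have $\phi_e(B) = \sum_k \xi_k (C_k)_{i,j}$, a linear combination of the $\xi_k$ in which at least one coefficient $(C_k)_{i,j}$ is nonzero; hence its zero set is a hyperplane in the coefficient space $\mathbb{R}^{\dim \mathcal{W}}$, which has Lebesgue, and therefore Gaussian, measure zero. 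A union bound over the finitely many edges of $\overline{G}$ then shows that with probability $1$ the matrix $B$ satisfies $\phi_e(B) \neq 0$ for all $e$, so $B \in \S(\overline{G})$ almost surely. In particular such a $B$ exists, which establishes the reverse implication.

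The argument is elementary, so I do not anticipate a serious obstacle. The only step requiring care is the reverse direction, where a single matrix must simultaneously avoid all of the proper subspaces $\ker \phi_e$; over the infinite field $\mathbb{R}$ this succeeds because a finite union of proper subspaces cannot cover a vector space, and the Gaussian formulation makes this concrete by exhibiting each forbidden set as a measure-zero locus. I would close by remarking that this shows the generic element of $\mathcal{W}$ already lies in $\S(\overline{G})$ whenever $\S(\overline{G}) \cap \mathcal{W}$ is nonempty, which is what makes the random sampling strategy of the surrounding algorithm effective.
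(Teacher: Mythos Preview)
Your proof is correct and follows essentially the same approach as the paper, which does not give a formal proof but outlines the same idea in the paragraph preceding the proposition: a coordinate functional vanishes on all of $\mathcal{W}$ if and only if it vanishes on every basis element, and a matrix avoiding all of the finitely many resulting proper subspaces (hyperplanes) exists generically. Your write-up makes the measure-zero argument for the Gaussian claim explicit, which the paper leaves implicit.
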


\subsection{Proof of Theorem~\ref{thm:conditions}}
We summarize some necessary and sufficient conditions for a graph to be complementary vanishing: 

\begin{itemize}
\item Methods which can verify a graph is not complementary vanishing:
\begin{itemize}
\item \textbf{Diagonal lemmas}: Lemma~\ref{one.diag.ent.zero} and Corollary~\ref{cor:noInducedPath}
\item \textbf{Odd cycle lemma}: Lemma~\ref{lem:oddcycle} in conjunction with Lemmas~\ref{prop.subneighborhood} and \ref{prop.one.private.neighbor}
\item \textbf{Gr\"obner basis arguments}:  Proposition~\ref{prop:groebner} and its modifications
\end{itemize}
\item Methods to verify a graph is complementary vanishing:
\begin{itemize}
\item \textbf{Twin proposition}:  Proposition~\ref{prop:twin}
\item \textbf{Duplication lemma}:  Lemma~\ref{lem:duplication}
\item \textbf{Random trial}:  Randomly pick matrices $A\in\S(G)$ and apply Proposition~\ref{prop:solveB} to see if $B\in\S(\overline{G})$ with $AB = O$ exists; or do the same for $\overline{G}$
\item \textbf{Manual construction}:  Judiciously construct matrices $A\in\S(G)$ and $B\in\S(\overline{G})$ with $AB = O$
\end{itemize}
\end{itemize}

\begin{table}[h]
\begin{center}
\begin{tabular}{c|ccc|ccc|c}
 ~ & \multicolumn{3}{c|}{Not complementary vanishing} & \multicolumn{3}{c|}{Complementary vanishing} & ~ \\
 $n$ & \textsf{diag} & \textsf{oc} & \textsf{grob} & \textsf{twin} & \textsf{dup} & \textsf{certificate found} & \textsf{total} \\
 \hline
 1 & ~ & ~ & ~ & ~ & ~ & 1 & 1 \\
 2 & ~ & ~ & ~ & ~ & ~ & ~ & 0 \\
 3 & ~ & ~ & ~ & ~ & ~ & ~ & 0 \\
 4 & 1 & ~ & ~ & ~ & ~ & ~ & 1 \\
 5 & 5 & ~ & ~ & ~ & ~ & ~ & 5 \\
 6 & 34 & ~ & ~ & ~ & ~ & ~ & 34 \\
 7 & 326 & 1 & ~ & ~ & ~ & 4 & 331 \\
 8 & 4905 & 8 & 4 & 6 & 14 & 12 & 4949 \\
\end{tabular}
\end{center}
\caption{The number of pairs $\{G,\overline{G}\}$ verified using each method, where $G$ and $\overline{G}$ are connected graphs on $n$ vertices.  The columns \textsf{diag}, \textsf{oc}, \textsf{grob} counts the graph pairs that are not complementary vanishing  due to the diagonal lemmas, the odd-cycle lemma, and Gr\"obner basis arguments.  The columns \textsf{twin}, \textsf{dup}, and \textsf{certificate found} count the graph pairs that are complementary vanishing  because of the twin proposition, the duplication lemma, or because the certificate is found by random trial or manual construction.  The methods were tested in order from left to right.}
\label{tab:summary}
\end{table}

It turns out that these techniques are enough for us to classify all  pairs $\{G,\overline{G}\}$ with $G$ and $\overline{G}$ connected on $8$ or few vertices.  The results are shown in Table~\ref{tab:summary}.  
Therefore, we have the following lemma, which one can check by computer is equivalent to Theorem~\ref{thm:conditions}.

\begin{lemma}
Let $G$ be a graph on $n\leq 8$ vertices such that both $G$ and $\overline{G}$ are connected.  Then $G$ is complementary vanishing  if and only if $G$ and $\overline{G}$ do not satisfy the hypotheses of Lemmas \ref{one.diag.ent.zero}, \ref{lem:oddcycle}, Corollary~\ref{cor:noInducedPath}, and are not one of the four pairs in Figure~\ref{fig:grob}.
\end{lemma}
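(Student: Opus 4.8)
The plan is to prove the lemma by a finite, computer-assisted case analysis ranging over all pairs $\{G,\overline{G}\}$ in which both $G$ and $\overline{G}$ are connected graphs on $n\le 8$ vertices. First I would generate a complete list of such pairs up to isomorphism (for instance using the nauty package), observing that at each order there are only finitely many, with the totals recorded in the final column of Table~\ref{tab:summary}. It then suffices to classify each individual pair correctly as complementary vanishing or not, and to confirm that the two classifications agree with the stated criteria.

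The necessity (\emph{only if}) direction is immediate from the results already established. The diagonal lemmas (Lemmas~\ref{one.diag.ent.zero}, \ref{prop.subneighborhood}, and \ref{prop.one.private.neighbor}, combined in Corollary~\ref{cor:noInducedPath}) and the odd-cycle lemma (Lemma~\ref{lem:oddcycle} used together with Lemmas~\ref{prop.subneighborhood} and \ref{prop.one.private.neighbor}) each certify that a graph is not complementary vanishing whenever their hypotheses hold in $G$ or in $\overline{G}$; and the four pairs in Figure~\ref{fig:grob} were shown not to be complementary vanishing via the Gr\"obner basis argument of Proposition~\ref{prop:groebner}. Hence any complementary vanishing graph in range must avoid all of these obstructions and cannot be one of the four exceptional pairs.

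For the sufficiency (\emph{if}) direction I would exhibit, for every remaining pair, an explicit certificate $A\in\S(G)$ and $B\in\S(\overline{G})$ with $AB=O$. The strategy is to apply the constructive tools in the order indicated in Table~\ref{tab:summary}: first test whether every vertex has a twin and invoke Proposition~\ref{prop:twin}; otherwise attempt the vertex-duplication construction of Lemma~\ref{lem:duplication}; otherwise run the randomized search of Proposition~\ref{prop:solveB}, selecting a random $A\in\S(G)$ (or $B\in\S(\overline{G})$) and solving the resulting linear system for a compatible partner; and finally, for the few stubborn cases, produce a matrix pair by hand. Recording which method succeeds on each pair reproduces the tallies in Table~\ref{tab:summary}, and checking that the row sums equal the totals certifies that every pair has been accounted for exactly once.

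The final and most delicate step is a consistency verification: one must confirm that the obstruction tests and the certificate-producing methods together \emph{partition} the set of pairs, with no pair satisfying both an obstruction and admitting a certificate, and none left unclassified. Concretely, this reduces to checking that after the diagonal and odd-cycle obstructions are applied, the only remaining non-complementary-vanishing graphs are precisely the four Gr\"obner pairs of Figure~\ref{fig:grob}. I expect the sufficiency direction to be the main obstacle: the structural constructions (twin and duplication) dispatch most graphs cleanly, but a handful require the randomized linear-algebra search or genuinely ad hoc hand constructions, and the real difficulty is ruling out that any such stubborn graph is an overlooked obstruction rather than a merely hard-to-find certificate. It is exactly this boundary that the Gr\"obner computations of Proposition~\ref{prop:groebner} pin down, since they provide a decisive negative certificate for the four exceptional pairs.
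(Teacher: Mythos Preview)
Your proposal is correct and matches the paper's own argument essentially verbatim: the lemma is established by a finite computer-assisted case analysis over all pairs $\{G,\overline{G}\}$ with both connected on $n\le 8$ vertices, applying the obstruction tests (diagonal lemmas, odd-cycle lemma, Gr\"obner basis) and the constructive methods (twin proposition, duplication lemma, random trial, manual construction) in the order recorded in Table~\ref{tab:summary}. The paper offers no further proof beyond pointing to that table, so your write-up is, if anything, more explicit about the logical structure of the verification than the paper itself.
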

We note that for one of the pairs in Appendix~A, namely $G_8$, we were unable to find a certificate using random trials; the only construction we know of for this graph was found manually.

We note that in principle, one could try to apply these techniques to determine which pairs of graphs on $9$ vertices are complementary vanishing.  However, applying the diagonal conditions to graphs on $9$ vertices leaves to many possibilities for random trails to effectively resolve.

\section{Conclusion} \label{Sec5}
In this paper we began the study of complementary vanishing graphs.  In view of Theorem \ref{thm:biconnected}, we have essentially reduced the problem to graphs $G$ such that both $G$ and $\overline{G}$ are connected.  In Theorem \ref{thm:conditions} we completely solved this problem for graphs on at most 8 vertices.  

As things currently stand, there does not seem to be a simple description which categorizes graphs that are complementary vanishing. However, there are certain classes of graphs where one might be able to determine the answer.  For example, it is natural to ask what happens for random graphs.

\begin{question}
If $G$ is chosen uniformly at random amongst all $n$-vertex graphs, does the probability that $G$ is complementary vanishing tend to 1 as $n$ tends towards infinity?
\end{question}
If the answer to this question is ``yes'', then it is likely that this will be difficult to prove.  Indeed, this would imply that the random graph satisfies the Graph Complement Conjecture asymptotically almost surely, which is an open problem.

Recall that there are four connected graphs $G$ on at most $8$ vertices such that their complements are also connected and such that these $G$ were shown not to be complementary vanishing  using a Gr\"{o}bner Basis argument.  
It would be nice if one could prove that these four graphs are not complementary vanishing  by utilizing combinatorial techniques.

\begin{question}
Can one use a combinatorial argument to prove that the graphs in Figure~\ref{fig:grob} are not complementary vanishing?
\end{question}

\textbf{Acknowledgments.} The authors would like to express their sincere gratitude to the organizers of the Mathematics Research Community on Finding Needles in Haystacks: Approaches to Inverse Problems Using Combinatorics and Linear Algebra, and the AMS for funding the program through NSF grant 1916439. 
J. C.-H. Lin was supported by the Young Scholar Fellowship Program (grant
no.\ MOST-110-2628-M-110-003) from the Ministry of Science and Technology of Taiwan.
S. Spiro was supported by the National Science Foundation Graduate Research Fellowship under Grant No.\ DGE-1650112. 

\bibliographystyle{abbrv}
\bibliography{refs}

\begin{thebibliography}{10}

\bibitem{AAC13}
B.~Ahmadi, F.~Alinaghipour, M.~S. Cavers, S.~Fallat, K.~Meagher, and
  S.~Nasserasr.
\newblock Minimum number of distinct eigenvalues of graphs.
\newblock {\em Electron. J. Linear Algebra}, 26:673--691, 2013.

\bibitem{AIM08}
{AIM Minimum Rank-Special Graphs Work Group}.
\newblock Zero forcing sets and the minimum rank of graphs.
\newblock {\em Linear Algebra Appl.}, 428(7):1628--1648, 2008.

\bibitem{AIM06}
{American Institute of Mathematics Workshop}.
\newblock Spectra of families of matrices described by graphs, digraphs, and
  sign patterns.
\newblock \url{http://aimath.org/pastworkshops/matrixspectrum.html}, 2006.

\bibitem{BBF12}
F.~Barioli, W.~Barrett, S.~M. Fallat, H.~T. Hall, L.~Hogben, and H.~van~der
  Holst.
\newblock On the graph complement conjecture for minimum rank.
\newblock {\em Linear Algebra Appl.}, 436(12):4373--4391, 2012.

\bibitem{BF04}
F.~Barioli and S.~M. Fallat.
\newblock On two conjectures regarding an inverse eigenvalue problem for
  acyclic symmetric matrices.
\newblock {\em Electron. J. Linear Algebra}, 11:41--50, 2004.

\bibitem{BBF20}
W.~Barrett, S.~Butler, S.~M. Fallat, H.~T. Hall, L.~Hogben, J.~C.-H. Lin, B.~L.
  Shader, and M.~Young.
\newblock The inverse eigenvalue problem of a graph: multiplicities and minors.
\newblock {\em J. Combin. Theory Ser. B}, 142:276--306, 2020.

\bibitem{BFH17}
W.~Barrett, S.~Fallat, H.~T. Hall, L.~Hogben, J.~C.-H. Lin, and B.~L. Shader.
\newblock Generalizations of the strong {A}rnold property and the minimum
  number of distinct eigenvalues of a graph.
\newblock {\em Electron. J. Combin.}, 24(2):Paper No. 2.40, 28, 2017.

\bibitem{Y91}
Y.~Colin~de Verdi\`ere.
\newblock On a new graph invariant and a criterion for planarity.
\newblock In {\em Graph structure theory ({S}eattle, {WA}, 1991)}, volume 147
  of {\em Contemp. Math.}, pages 137--147. Amer. Math. Soc., Providence, RI,
  1993.

\bibitem{CLO15}
D.~A. Cox, J.~Little, and D.~O'Shea.
\newblock {\em Ideals, varieties, and algorithms: An introduction to
  computational algebraic geometry and commutative algebra}.
\newblock Springer, fourth edition, 2015.

\bibitem{EKSTRAND20131862}
J.~Ekstrand, C.~Erickson, H.~T. Hall, D.~Hay, L.~Hogben, R.~Johnson,
  N.~Kingsley, S.~Osborne, T.~Peters, J.~Roat, A.~Ross, D.~D. Row, N.~Warnberg,
  and M.~Young.
\newblock Positive semidefinite zero forcing.
\newblock {\em Linear Algebra Appl.}, 439(7):1862--1874, 2013.

\bibitem{FH07}
S.~M. Fallat and L.~Hogben.
\newblock The minimum rank of symmetric matrices described by a graph: a
  survey.
\newblock {\em Linear Algebra Appl.}, 426(2-3):558--582, 2007.

\bibitem{GK60}
F.~R. Gantmacher and M.~G. Krein.
\newblock {\em Oszillationslnatrizen, Oszillationskerne und kleine Schwingungen
  nzechanischer Systenze}.
\newblock Translated by Alfred Stohr, Berlin, Akademie-Verlag, 1960.

\bibitem{JORET2012488}
G.~Joret and D.~R. Wood.
\newblock Nordhaus-{G}addum for treewidth.
\newblock {\em European J. Combin.}, 33(4):488--490, 2012.

\bibitem{KLV97}
A.~Kotlov, L.~Lov\'{a}sz, and S.~Vempala.
\newblock The {C}olin de {V}erdi\`ere number and sphere representations of a
  graph.
\newblock {\em Combinatorica}, 17(4):483--521, 1997.

\bibitem{LOS19}
R.~H. Levene, P.~Oblak, and H.~\v{S}migoc.
\newblock A {N}ordhaus-{G}addum conjecture for the minimum number of distinct
  eigenvalues of a graph.
\newblock {\em Linear Algebra Appl.}, 564:236--263, 2019.

\bibitem{LNP14}
X.~Li, M.~Nathanson, and R.~Phillips.
\newblock Minimum vector rank and complement critical graphs.
\newblock {\em Electron. J. Linear Algebra}, 27:100--123, 2014.

\end{thebibliography}


\includepdf[pages={1-}, landscape=true]{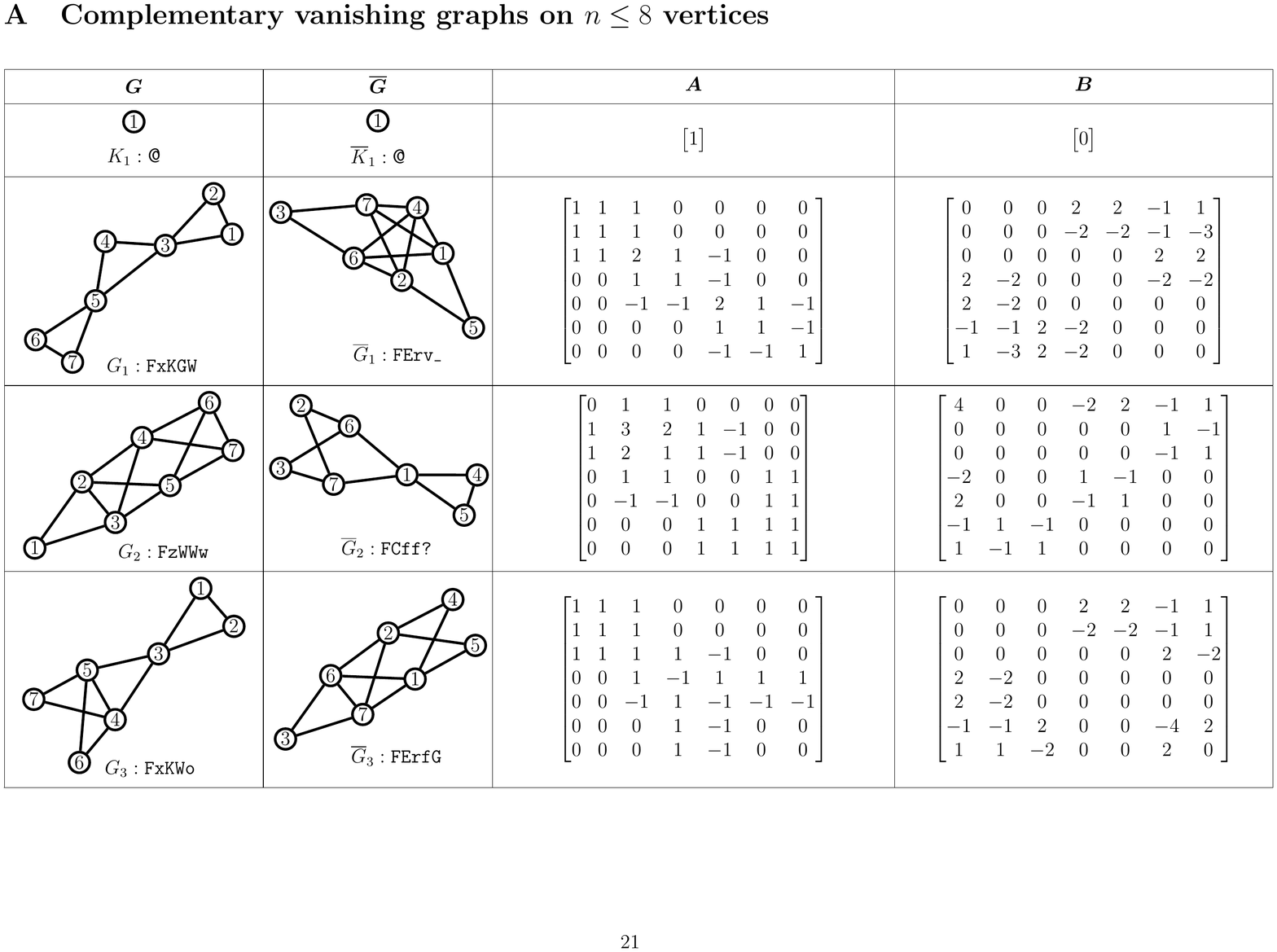}

\end{document}